\DeclareMathOperator{\Cl}{Cl}
\DeclareMathOperator{\Span}{Span}
\DeclareMathOperator{\assoc}{\mathcal{A}}
\DeclareMathOperator{\conjassoc}{\mathcal{A}^{--}}
\DeclareMathOperator{\conjassocL}{\mathcal{A}_{\mathit{L}}^{--}}
\DeclareMathOperator{\conjassocR}{\mathcal{A}_{\mathit{R}}^{--}}
\DeclareMathOperator{\cycl}{\mathcal{C}}
\DeclareMathOperator{\cyclplus}{\mathcal{C}^{+}}
\DeclareMathOperator{\cyclminus}{\mathcal{C}^{--}}
\newtheorem{theorem}{Theorem}[section]
\newtheorem{corollary}[theorem]{Corollary}
\newtheorem{definition}[theorem]{Definition}
\newtheorem{lemma}[theorem]{Lemma}
\title{On cyclic left modules and arbitrary bimodules over the octonions}
\author{%
M\'at\'e Lehel Juh\'asz\footnote{Partial support from Project 2018-1.2.1-NKP-00008: Exploring the Mathematical Foundations of Artificial Intelligence and the "Big Data—--Momentum" grant of the Hungarian Academy of Sciences. }
}
\begin{document}

\maketitle

\begin{abstract}
Qinghai Huo, Yong Li, Guangbin Ren described the structure of left $\mathbb{O}$-modules in great detail in \cite{huo2019}.
However, they left open a question on cyclic left $\mathbb{O}$-modules.
This note intends to close this gap and provide a complete description of bimodules.
\end{abstract}

\section{Introduction}

In \cite{huo2019}, Qinghai Huo, Yong Li and Guangbin Ren set out to classify a generalization of vector spaces over the non-associative algebra known as the octonions (denoted by $\mathbb{O}$) which they call left $\mathbb{O}$-modules.
They construct a fairly complete theory but end on a conjecture about the nature of submodules generated by a single element.
This short note provides an answer to that question, completing their theory.
Furthermore, in the rest of the note it is shown that by imposing a natural bimodule structure, left modules that correspond to vector spaces have a unique bimodule structure (in stark contrast to the quaternionic case), while other left modules have none.

The octonions are part of a widely known series of real division algebras that can be created iteratively via the Cayley-Dickson construction, and includes the real numbers $\mathbb{R}$, complex numbers $\mathbb{C}$ and quaternions $\mathbb{H}$.
In fact, the octonions with these $3$ provide the only possible examples of division algebras over the real numbers.
Unlike the first three however, octonions lack the associative property and instead satisfy a weaker property called the \textbf{alternative identities}:
\[
(xx)y=x(xy)
\qquad
(xy)x=x(yx)
\qquad
(yx)x=y(xx).
\]
This means that the structure of modules becomes more complicated, since consecutive multiplication of a vector by two scalars cannot be simplified to a single multiplication: $\lambda(\mu\mathbf{v})\ne(\lambda\mu)\mathbf{v}$.
If we permitted this, only the trivial module would be allowed.
Instead, a similar law can be proposed using alternativity: $\lambda(\lambda\mathbf{v})=(\lambda\lambda)\mathbf{v}$.
The usual method for classifying modules by choosing a set of vectors as basis will not work with this rule.
Nonetheless with this condition \cite{huo2019} builds up a complete theory of such modules.

One key fact in \cite{huo2019} is that every such module has a natural Clifford module structure.
This permits one to completely describe an $\mathbb{O}$-module using the dimension of two real subspaces of the module, the associative and conjugate associative parts, which vaguely correspond to vectors containing only real entries.
These two dimensions completely characterize such a module.
For example, there are only two types of simple $\mathbb{O}$-modules: one that is isomorphic to $\mathbb{O}$ as a module over itself, and one that is isomorphic to its conjugate, $\overline{\mathbb{O}}$, where the multiplication rule $\star$ is given by $p\star\mathbf{m}=\overline{p}\mathbf{m}$.
Then ${}_{\mathbb{O}}\mathbb{O}$ has a trivial conjugate associative part, while ${}_{\mathbb{O}}\overline{\mathbb{O}}$ has a trivial associative part.

Section 2 is mostly a reiteration of those definitions and theorems in \cite{huo2019} without proofs that are needed for the main theorem.
A few facts are included to give a more explicit description of left $\mathbb{O}$-modules.
These will be pointed out specifically.
Section 3 gives a characterization of cyclic left $\mathbb{O}$-modules.
Section 4 presents a definition of bimodules and shows that only those left $\mathbb{O}$-modules that have no conjugate associative part admit a bimodule structure, and that that structure is unique and isomorphic to $\bigoplus_{i\in S}\mathbb{O}$ for some set $S$.

\section{Review of previous results}

Much of this section is a reiteration of the results in \cite{huo2019}, without proofs.
The reader is kindly asked to refer to the article for the proofs.
Statements not found there will be pointed out explicitly.

The \textbf{octonions} $\mathbb{O}$ are an $8$ dimensional real algebra, generated by $1,e_1,\dots,e_7$ such that $e_i^2=-1$ and $e_ie_j=-e_je_i=e_k$ whenever $k-i\equiv 3(j-i)~(\mathrm{mod}~7)$.
They satisfy the alternative property, which can be reformulated using the \textbf{associator}: $[a,b,c]=(ab)c-a(bc)$ as
\[
[a,b,c]=[b,c,a]=[c,a,b]=-[b,a,c].
\]
The \textbf{octonionic conjugation} is a real linear transformation that extends the map that preserves $1$ but maps all $e_i$ to the corresponding $-e_i$.
It is denoted by $p\mapsto \overline{p}$.
An octonion is \textbf{purely imaginary} if $\overline{p}=-p$.
The map $p\mapsto p\cdot\overline{p}$ has real, non-negative values, and defines a positive definite quadratic form on $\mathbb{O}$ considered as an $8$-dimensional real vector space.

\begin{definition}
(\cite{huo2019}, 3.1)
A (left) \textbf{$\mathbb{O}$-module} is a real vector space $M$ equipped with an $\mathbb{R}$-bilinear map $\cdot\colon\mathbb{O}\times M\to M$ denotes as $(q,m)\mapsto qm$ with the following conditions.
Let
\[
[p,q,m]:=(pq)m-p(qm).
\]
Then
\begin{itemize}

\item $1m=m$;

\item $[p,q,m]=-[q,p,m]$, the \textbf{alternating rule}.

\end{itemize}
\end{definition}

Let us denote the operation of left multiplication by $q$ on a left $\mathbb{O}$-module by $L_q$.
Then by the alternating rule,
\[
L_{e_i}L_{e_j}+L_{e_j}L_{e_i}=-2\delta_{ij}I
\]
where $\delta_{ij}$ is the Kronecker symbol where $\delta_{ij}=1$ only if $i=j$, otherwise it is $0$, and $I$ is the identity operation.

Given a real vector space $V$ with a non-degenerate quadratic form $V$ over it, the \textbf{Clifford algebra} corresponding to $(V,Q)$ is the algebra generated by vectors in $V$ with the rule that $v^2=-Q(v)\cdot 1$.
A formal definition is given in \cite{atiyah1963}.

The algebra generated by $L_{e_1},\dots,L_{e_7}$ is a Clifford algebra over a $7$ dimensional Euclidean space, where the operators $L_{e_i}$ provide an orthogonal basis.
We will denote this algebra by $\Cl_7(\mathbb{R})$.

\begin{lemma}
(\cite{huo2019}, 4.1)
The category of left $\mathbb{O}$-modules is isomorphic to the category of left $\Cl_7(\mathbb{R})$-modules.
\end{lemma}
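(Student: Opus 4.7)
The plan is to construct mutually inverse functors $F$ and $G$ between the two categories, both acting as the identity on underlying real vector spaces. For the forward direction, given a left $\mathbb{O}$-module $M$, let $F(M)$ carry the $\Cl_7(\mathbb{R})$-action in which the Clifford generator $e_i$ acts as $L_{e_i}$. The identity $L_{e_i}L_{e_j}+L_{e_j}L_{e_i}=-2\delta_{ij}I$ displayed just above the lemma is exactly the Clifford relation, so by the universal property of $\Cl_7(\mathbb{R})$ this extends uniquely to an $\mathbb{R}$-algebra homomorphism $\Cl_7(\mathbb{R})\to\mathrm{End}_{\mathbb{R}}(M)$, giving $M$ a $\Cl_7(\mathbb{R})$-module structure.

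For the reverse functor $G$, given a $\Cl_7(\mathbb{R})$-module $N$ in which the generators act as $T_1,\dots,T_7$, I would define
\[
(a_0+a_1e_1+\cdots+a_7e_7)\cdot m := a_0 m + \sum_{i=1}^7 a_i T_i(m).
\]
This is $\mathbb{R}$-bilinear in $(q,m)$ and sends $1$ to the identity. To verify the alternating rule $[p,q,m]=-[q,p,m]$, observe that the associator is $\mathbb{R}$-trilinear, so it suffices to check the identity for $p,q$ ranging over the basis $\{1,e_1,\dots,e_7\}$. The cases involving $1$ are automatic since $1\cdot m=m$. For $p=e_i$, $q=e_j$ the identity $[e_i,e_j,m]+[e_j,e_i,m]=0$ expands to $(e_ie_j+e_je_i)m-(T_iT_j+T_jT_i)(m)$, and since $e_ie_j+e_je_i=-2\delta_{ij}$ inside $\mathbb{O}$, this vanishes precisely because of the Clifford relation $T_iT_j+T_jT_i=-2\delta_{ij}I$.

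The two functors are mutually inverse because in either direction one recovers the same operators $T_i=L_{e_i}$ on the common underlying vector space. Morphisms in both categories are $\mathbb{R}$-linear maps intertwining the respective actions; since each action is determined by its values on the generators $e_i$ on the octonion side and on $L_{e_i}$ on the Clifford side, and these generators are identified by $F$ and $G$, the two intertwining conditions coincide. The only step that requires any genuine care is the alternating-rule verification, and this is only mildly delicate because it amounts to reducing a trilinear identity to its basis case before invoking the Clifford relation; everything else is formal.
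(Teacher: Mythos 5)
Your proof is correct, and it is the standard (and, as far as I can tell, intended) argument: the paper itself defers the proof to \cite{huo2019}, but the relation $L_{e_i}L_{e_j}+L_{e_j}L_{e_i}=-2\delta_{ij}I$ displayed immediately above the lemma is exactly the pivot on which your two functors turn, so the approaches coincide. Both the reduction of the alternating rule to basis pairs via trilinearity of the associator and the use of the universal property of $\Cl_7(\mathbb{R})$ are sound, and the observation that the intertwining conditions agree because both actions are generated by $e_1,\dots,e_7$ correctly handles morphisms.
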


Let us denote by $M(n,\mathbb{F})$ the ring of $n\times n$ matrices over a field $\mathbb{F}$.

\begin{lemma}
(\cite{atiyah1963})
$\Cl_7(\mathbb{R})$ is isomorphic to the direct sum of two matrix algebras, $M(8,\mathbb{R})\oplus M(8,\mathbb{R})$.
\end{lemma}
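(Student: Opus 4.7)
The plan is to split $\Cl_7(\mathbb{R})$ using a central involution built from a ``pseudoscalar'' element, and then identify each summand with $M(8,\mathbb{R})$ via a concrete octonionic representation.

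The starting point is the element $\omega := L_{e_1} L_{e_2} \cdots L_{e_7} \in \Cl_7(\mathbb{R})$. Two facts drive the argument: (i) $\omega^2 = I$, obtained by rearranging $\omega^2$ via $\binom{7}{2} = 21$ anti-commutations and then using $L_{e_i}^2 = -I$ seven times, giving $(-1)^{21}(-I)^{7} = I$; and (ii) $\omega$ is central, since commuting $L_{e_j}$ through $\omega$ requires an even number of anti-commutations regardless of $j$. It follows that $\pi_\pm := \tfrac{1}{2}(I \pm \omega)$ are complementary central orthogonal idempotents, producing
\[
\Cl_7(\mathbb{R}) \;\cong\; \pi_+\Cl_7(\mathbb{R}) \;\oplus\; \pi_-\Cl_7(\mathbb{R}),
\]
with each summand of real dimension $\tfrac{1}{2}\cdot 2^7 = 64$.

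Next, to identify each summand with $M(8,\mathbb{R})$, I would exhibit two inequivalent irreducible $8$-dimensional real representations: $\mathbb{O}$ itself with $L_{e_i}$ acting as left multiplication by $e_i$, and the conjugate $\overline{\mathbb{O}}$ (same underlying space) where $L_{e_i}$ acts as left multiplication by $-e_i$. Each is irreducible, because the orbit of any nonzero $v$ under the $\mathbb{R}$-linear combinations of $I, L_{e_1},\dots,L_{e_7}$ already equals $\{pv : p \in \mathbb{O}\} = \mathbb{O}$ by the division-algebra property of $\mathbb{O}$. They are inequivalent because $\omega$ must act as some scalar $\varepsilon I$ on each irreducible module (its $\pm 1$-eigenspaces are submodules, and one must vanish), and flipping signs on all seven generators multiplies $\omega$ by $(-1)^7=-1$, so the two modules give opposite values of $\varepsilon$. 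Consequently one representation factors through $\pi_+\Cl_7(\mathbb{R})$ and the other through $\pi_-\Cl_7(\mathbb{R})$.

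Finally, Burnside's density theorem yields a surjection from each summand onto $\mathrm{End}_\mathbb{R}(\mathbb{O}) \cong M(8,\mathbb{R})$; the dimension equality $64 = 64$ promotes each surjection to an isomorphism, completing the proof. The step I expect to be the main obstacle is this density/surjectivity argument, which requires some care to avoid circularity with general semisimple structure theory. A clean alternative is a purely inductive approach via Clifford periodicity isomorphisms such as $\Cl_{n+2,0} \cong M(2,\Cl_{0,n})$, building from the easy base cases $\Cl_{2,0} \cong M(2,\mathbb{R})$ and $\Cl_{0,2} \cong \mathbb{H}$ up to $\Cl_6(\mathbb{R}) \cong M(8,\mathbb{R})$; this matches each $\pi_\pm$ summand, since $\omega = \pm I$ on $\pi_\pm\Cl_7(\mathbb{R})$ lets one solve for $L_{e_7}$ in terms of the other six generators.
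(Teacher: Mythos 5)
The paper does not actually prove this lemma: it cites \cite{atiyah1963} and uses the result as given, so there is no internal argument to compare yours against. Your reconstruction is sound in its skeleton. Indeed $\omega = L_{e_1}\cdots L_{e_7}$ is central with $\omega^2 = I$, the central idempotents $\pi_\pm$ split $\Cl_7(\mathbb{R})$ into two $64$-dimensional two-sided ideals interchanged by the grade involution $L_{e_i}\mapsto -L_{e_i}$, and $\mathbb{O}$, $\overline{\mathbb{O}}$ are $8$-dimensional irreducible modules on which $\omega$ acts by opposite signs; all of that is correct and standard.

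The step you flagged as the main obstacle is, however, a genuine gap as written. Burnside's density theorem requires an algebraically closed field; over $\mathbb{R}$, irreducibility of $\mathbb{O}$ only yields (via Jacobson density) a surjection $\pi_\pm\Cl_7(\mathbb{R})\to\mathrm{End}_D(\mathbb{O})$, where the commutant $D=\mathrm{End}_{\Cl_7}(\mathbb{O})$ could a priori be $\mathbb{R}$, $\mathbb{C}$, or $\mathbb{H}$. In the latter two cases the image would be $M(4,\mathbb{C})$ or $M(2,\mathbb{H})$, of real dimension $32$ or $16$, and your $64=64$ count would then show $\pi_\pm\Cl_7(\mathbb{R})$ is not simple rather than that it equals $M(8,\mathbb{R})$. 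The missing ingredient is that $D=\mathbb{R}$: if $T$ commutes with every $L_{e_i}$, then $T(e_i x)=e_iT(x)$, so taking $x=1$ shows $T$ is right multiplication by $a:=T(1)$, and taking $x=e_j$ forces $(e_ie_j)a=e_i(e_ja)$ for all $i,j$, i.e.\ $a$ lies in the nucleus of $\mathbb{O}$, which is $\mathbb{R}$. With $D=\mathbb{R}$ pinned down, your dimension count does close the argument. Your periodicity fallback also works, but note that passing from the base case $\mathbb{H}$ up to the mixed-signature target needs both directions of the periodicity isomorphism (and the identity $\mathbb{H}\otimes_\mathbb{R}\mathbb{H}\cong M(4,\mathbb{R})$), not only the single isomorphism $\Cl_{n+2,0}\cong M(2,\Cl_{0,n})$ you quote.
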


Note that $\mathbb{O}$ is a simple left $\mathbb{O}$-module where scalar multiplication is given by the octonionic multiplication.
Let us define the left $\mathbb{O}$-module $\overline{\mathbb{O}}$, isomorphic as a real vector space to $\mathbb{O}$, endowed with the multiplication
\[
p\star m := \overline{p}\cdot m
\]
where $\cdot$ is the octonionic multiplication.

The following is a classically known result arising from the Morita equivalence between a ring and a matrix ring over it.
Consider that the ring $M(8,\mathbb{R})$ acts on $\mathbb{R}^8$ via matrix multiplication.
Then given a real vector space $U$, $M(8,\mathbb{R})$ acts naturally on $\mathbb{R}^8\otimes U\cong U^{\oplus 8}$.

\begin{lemma}
Every left $\Cl_7(\mathbb{R})$-module is isomorphic to $U^{\oplus 8}\oplus V^{\oplus 8}$ for some real vector spaces $U$ and $V$ where the two direct summands in $M(8,\mathbb{R})\oplus M(8,\mathbb{R})$ act on the two components.
\end{lemma}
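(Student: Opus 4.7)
The plan is to reduce to the simple matrix-algebra case by decomposing along the central idempotents of the direct sum, then apply the standard Morita argument.

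First, I would use the decomposition $\Cl_7(\mathbb{R}) \cong M(8,\mathbb{R}) \oplus M(8,\mathbb{R})$ from the previous lemma. The elements $\varepsilon_1 = (I,0)$ and $\varepsilon_2 = (0,I)$ are central orthogonal idempotents with $\varepsilon_1 + \varepsilon_2 = 1$. Hence for any left $\Cl_7(\mathbb{R})$-module $M$, the decomposition $M = \varepsilon_1 M \oplus \varepsilon_2 M$ is a direct sum of subspaces on which only the first (respectively second) summand of $M(8,\mathbb{R}) \oplus M(8,\mathbb{R})$ acts nontrivially. This reduces the problem to showing: every left $M(8,\mathbb{R})$-module $N$ is isomorphic to $U^{\oplus 8}$ for some real vector space $U$, with the standard action by matrix multiplication.

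For the reduced problem, I would use the matrix units $e_{ij}$ of $M(8,\mathbb{R})$. Set $U := e_{11} N$. The identity matrix decomposes as $1 = \sum_{i=1}^{8} e_{ii}$ into orthogonal idempotents, so $N = \bigoplus_{i=1}^{8} e_{ii} N$. Multiplication by $e_{i1}$ gives a linear map $e_{11} N \to e_{ii} N$, and multiplication by $e_{1i}$ is a two-sided inverse (using $e_{1i} e_{i1} = e_{11}$ and $e_{i1} e_{1i} = e_{ii}$). Therefore each summand $e_{ii} N$ is canonically isomorphic to $U$, giving an $\mathbb{R}$-linear isomorphism $N \cong U^{\oplus 8}$.

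It then remains to verify that this identification is compatible with the $M(8,\mathbb{R})$-action and matches the action on $\mathbb{R}^8 \otimes U$. Under the isomorphism above, an element of $N$ corresponds to a tuple $(u_1,\dots,u_8)$ with $u_i \in U$, realized inside $N$ as $\sum_i e_{i1} u_i$. A straightforward computation using the relations $e_{jk} e_{i1} = \delta_{ki} e_{j1}$ shows that left multiplication by $e_{jk}$ sends this tuple to the tuple whose $j$-th entry is $u_k$ and other entries are zero, which is exactly the action of $e_{jk}$ on $\mathbb{R}^8 \otimes U \cong U^{\oplus 8}$ via matrix multiplication on the $\mathbb{R}^8$ factor. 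Combining the two pieces yields $M \cong U^{\oplus 8} \oplus V^{\oplus 8}$ with $U = \varepsilon_1 e_{11} M$ and $V = \varepsilon_2 e_{11}' M$ (using matrix units $e_{11}, e_{11}'$ in the two summands). The only step requiring care is the bookkeeping that the actions on the two direct summands really are independent and match the claimed description; this is guaranteed by the centrality of $\varepsilon_1, \varepsilon_2$, so no genuine obstacle is expected.
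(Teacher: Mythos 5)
Your argument is correct and fills in what the paper deliberately leaves unproved: the paper simply cites this lemma as ``a classically known result arising from the Morita equivalence between a ring and a matrix ring over it'' and gives no proof. Your proof is precisely the standard Morita argument spelled out in elementary terms: decompose along the central idempotents $(I,0)$ and $(0,I)$ to reduce to a single matrix algebra, then use the orthogonal idempotents $e_{ii}$ and the isomorphisms $e_{1i}, e_{i1}$ between corners to identify a left $M(8,\mathbb{R})$-module $N$ with $(e_{11}N)^{\oplus 8}$, and finally check that the identification intertwines the matrix action with the standard action on $\mathbb{R}^8\otimes U$. The small computations ($e_{1i}e_{i1}=e_{11}$, $e_{i1}e_{1i}=e_{ii}$, and $e_{jk}\sum_i e_{i1}u_i = e_{j1}u_k$) are all correct, and the reduction via central idempotents is legitimate since they annihilate each other's summands. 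So the proposal is not a genuinely different route from the paper's intent; it is the same route, written out rather than cited.
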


\begin{lemma}
(\cite{huo2019})
The only simple $\Cl_7(\mathbb{R})$-modules are $(\mathbb{R}^8,0)$ and $(0,\mathbb{R}^8)$, which correspond to $\mathbb{O}$ and $\overline{\mathbb{O}}$.
\end{lemma}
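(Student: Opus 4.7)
The plan is to combine the two preceding structural lemmas with an explicit verification that $\mathbb{O}$ and $\overline{\mathbb{O}}$ are non-isomorphic simple modules. By the preceding classification, every $\Cl_7(\mathbb{R})$-module takes the form $U^{\oplus 8}\oplus V^{\oplus 8}$, where the two copies of $M(8,\mathbb{R})$ act separately on each summand. A routine analysis shows such a module is simple if and only if exactly one of $U, V$ is trivial and the other is one-dimensional: if both $U$ and $V$ are non-zero then either summand $U^{\oplus 8}$ or $V^{\oplus 8}$ is a proper non-trivial submodule, while if $\dim U \ge 2$ then any proper subspace $W \subsetneq U$ yields the proper submodule $W^{\oplus 8}$ (since $M(8,\mathbb{R})$ acts on each tensor factor of $\mathbb{R}^{8}\otimes U$ separately). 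Hence the only simple $\Cl_7(\mathbb{R})$-modules up to isomorphism are $(\mathbb{R}^8, 0)$ and $(0, \mathbb{R}^8)$.

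To identify $\mathbb{O}$ and $\overline{\mathbb{O}}$ as exactly these two simples, I would first check that each is simple as a left $\mathbb{O}$-module: any non-zero $m \in \mathbb{O}$ generates the whole module because $m^{-1} m = 1$ (a valid alternative-algebra identity) and then $p\cdot 1 = p$ recovers every element; a symmetric argument using $\overline{m^{-1}}\star m = 1$ handles $\overline{\mathbb{O}}$. Next I would verify that $\mathbb{O} \not\cong \overline{\mathbb{O}}$ as left $\mathbb{O}$-modules. Any module homomorphism $\phi\colon \mathbb{O} \to \overline{\mathbb{O}}$ is determined by $a := \phi(1)$ via $\phi(p) = p\star a = \overline{p}a$, and the homomorphism condition $\phi(pq) = p\star \phi(q) = \overline{p}(\overline{q}a)$ becomes $(\overline{q}\,\overline{p})a = \overline{p}(\overline{q}a)$. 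Choosing $p = e_1$, $q = e_2$, $a = 1$ gives $-e_1 e_2$ on the left and $e_1 e_2$ on the right, which differ; so no isomorphism exists.

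The main obstacle is this non-isomorphism step, since everything else is forced by Morita-style reasoning once the Wedderburn decomposition of $\Cl_7(\mathbb{R})$ is in hand; here one genuinely needs the non-associativity of the octonions to separate the two module structures. With this in place, $\mathbb{O}$ and $\overline{\mathbb{O}}$ supply two non-isomorphic simple $\Cl_7(\mathbb{R})$-modules, and by the count above they must correspond to $(\mathbb{R}^8, 0)$ and $(0, \mathbb{R}^8)$ respectively.
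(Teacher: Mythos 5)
The paper does not prove this lemma; it is cited verbatim from Huo--Li--Ren, so your proposal is being judged on its own. The overall plan is sound: use the Wedderburn decomposition $\Cl_7(\mathbb{R})\cong M(8,\mathbb{R})\oplus M(8,\mathbb{R})$ to list the simple modules, then exhibit $\mathbb{O}$ and $\overline{\mathbb{O}}$ as two non-isomorphic simples. Your simplicity argument is correct (generating $1=m^{-1}m$ and then acting by $L_p$), though the parenthetical ``a valid alternative-algebra identity'' oversells it --- $m^{-1}m=1$ uses that $\mathbb{O}$ is a composition/division algebra, not just alternativity.

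The genuine gap is in the non-isomorphism step. You correctly reduce any module homomorphism $\phi\colon\mathbb{O}\to\overline{\mathbb{O}}$ to $\phi(p)=\overline{p}a$ with $a=\phi(1)$, and correctly derive the constraint $(\overline{q}\,\overline{p})a=\overline{p}(\overline{q}a)$ for all $p,q$. But you then \emph{fix} $a=1$ and check the constraint fails for $p=e_1$, $q=e_2$. That only rules out the single map $\phi(p)=\overline{p}$; it leaves open the possibility that a different choice of $a=\phi(1)$ satisfies the constraint. Since $a$ is determined by $\phi$, not by you, you must show the constraint fails for \emph{every} nonzero $a\in\mathbb{O}$. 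This is fillable: for instance, substituting $q=a$ and using the Moufang identity $(\overline{a}\,\overline{p})a=\overline{a}\,\overline{p}\,a$ forces $\overline{a}\,\overline{p}\,a=|a|^2\overline{p}$ for all $p$, hence $a/|a|$ commutes with all of $\mathbb{O}$ and must be real, and then the constraint reduces to commutativity of $\mathbb{O}$, which fails. An alternative and arguably cleaner route, closer in spirit to the Clifford-module literature the paper leans on, is to observe that the central element $\omega=L_{e_1}\cdots L_{e_7}$ acts as $+1$ on one of $\mathbb{O},\overline{\mathbb{O}}$ and as $-1$ on the other (the sign flips under $e_i\mapsto -e_i$), which pins down which $M(8,\mathbb{R})$ factor acts nontrivially and immediately separates the two simples.

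Finally, you implicitly use the category equivalence between left $\mathbb{O}$-modules and left $\Cl_7(\mathbb{R})$-modules (the paper's Lemma 2.2) when you verify simplicity and non-isomorphism at the level of $\mathbb{O}$-modules but state the conclusion for $\Cl_7(\mathbb{R})$-modules; this is fine but worth saying explicitly.
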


\begin{corollary}
The map
\[
p\in\mathbb{O}\mapsto L_p|_{\mathbb{O}}\oplus L_p|_{\overline{\mathbb{O}}},
\]
gives an explicit representation of the elements of $\mathbb{O}$ in $\Cl_7{\mathbb{R}}\cong M(8,\mathbb{R})\oplus M(8,\mathbb{R})$, compatible with the left $\mathbb{O}$-module structures of $\mathbb{O}$ and $\overline{\mathbb{O}}$.
\end{corollary}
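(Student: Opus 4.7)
The plan is to assemble the preceding lemmas; I expect no genuinely new ingredients are needed. First I would record that for any left $\mathbb{O}$-module $M$, the $\mathbb{R}$-linear extension of $p\mapsto L_p$ sends $p=p_0+\sum_{i=1}^7 p_i e_i$ to $p_0 I + \sum_i p_i L_{e_i}$. Hence its image lies in the $\mathbb{R}$-span of $I, L_{e_1},\dots, L_{e_7}$, and in particular inside the subalgebra generated by $L_{e_1},\dots, L_{e_7}$, which is (a quotient of) $\Cl_7(\mathbb{R})$ by the alternating rule.

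Next I would specialize to $M=\mathbb{O}\oplus\overline{\mathbb{O}}$. By the lemma identifying $\mathbb{O}$ and $\overline{\mathbb{O}}$ as the two non-isomorphic simple $\Cl_7(\mathbb{R})$-modules, one for each summand of $M(8,\mathbb{R})\oplus M(8,\mathbb{R})$, the combined $\Cl_7(\mathbb{R})$-action on $\mathbb{O}\oplus\overline{\mathbb{O}}$ is an algebra isomorphism onto $M(8,\mathbb{R})\oplus M(8,\mathbb{R})$. Consequently $L_p|_{\mathbb{O}}\oplus L_p|_{\overline{\mathbb{O}}}$ is a well-defined element of $M(8,\mathbb{R})\oplus M(8,\mathbb{R})\cong \Cl_7(\mathbb{R})$, and the assignment $p\mapsto L_p|_{\mathbb{O}}\oplus L_p|_{\overline{\mathbb{O}}}$ is $\mathbb{R}$-linear by construction.

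To close the argument I would verify two things. Injectivity follows by evaluating at $1\in\mathbb{O}$: since $L_p(1)=p$, the vanishing of $L_p|_{\mathbb{O}}$ already forces $p=0$. Compatibility is immediate from the definitions: the element $L_p|_{\mathbb{O}}\oplus L_p|_{\overline{\mathbb{O}}}$ acts on $(m_1,m_2)\in\mathbb{O}\oplus\overline{\mathbb{O}}$ as $(pm_1,\overline{p}m_2)=(pm_1,p\star m_2)$, reproducing the left octonion module structure on each summand.

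Since the proof only unpacks definitions and cites the classification of simple $\Cl_7(\mathbb{R})$-modules, I do not anticipate a real obstacle. The one point to guard against is conflating the $\mathbb{R}$-linearity of $p\mapsto L_p$ (which is what we use and which always holds) with an $\mathbb{R}$-algebra homomorphism (which would contradict non-associativity of $\mathbb{O}$): the content of the corollary is only that this $\mathbb{R}$-linear map embeds $\mathbb{O}$ as an $8$-dimensional real subspace of $\Cl_7(\mathbb{R})$, not that the image is closed under Clifford multiplication.
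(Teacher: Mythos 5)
Your proposal is correct and follows the route the paper implicitly intends: the corollary is stated without proof as an immediate consequence of the preceding lemmas, and you simply unpack it — $\Cl_7(\mathbb{R})$ acts faithfully on $\mathbb{O}\oplus\overline{\mathbb{O}}$ because both simple summands of $M(8,\mathbb{R})\oplus M(8,\mathbb{R})$ appear, so its image in $\mathrm{End}_{\mathbb{R}}(\mathbb{O})\oplus\mathrm{End}_{\mathbb{R}}(\overline{\mathbb{O}})$ realizes the isomorphism, and $p\mapsto L_p|_{\mathbb{O}}\oplus L_p|_{\overline{\mathbb{O}}}$ lands in it by construction. Your closing caveat is worth keeping: the map is an $\mathbb{R}$-linear embedding of the $8$-dimensional space $\mathbb{O}$ into the $128$-dimensional algebra $\Cl_7(\mathbb{R})$, not an algebra homomorphism, and "representation" here should be read in exactly that sense.
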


\begin{definition}
Given a left $\mathbb{O}$-module $M$, the set of \textbf{associative elements} is defined as
\[
\assoc(M):=\{m\in M\mid [p,q,m]=(pq)m-p(qm)=0, \forall p,q\in\mathbb{O}\}.
\]
The set of \textbf{conjugate associate elements} is defined as
\[
\conjassoc(M):=\{m\in M\mid (pq)m-q(pm)=0, \forall p,q\in\mathbb{O}\}.
\]
\end{definition}

The next theorem is a combination of several results in \cite{huo2019}, including 3.10, 3.13 and 4.5.

\begin{theorem}
\label{thm:decomp}
Any left $\mathbb{O}$-module $M$ decomposes as the direct sum
\[
M=\mathbb{O}\assoc(M)\oplus\mathbb{O}\conjassoc(M).
\]
Furthermore, any real basis $S^{+}$ of $\assoc(M)$ (respectively, $S^{-}$ for $\conjassoc(M)$) we have $\mathbb{O}\assoc(M)\cong\bigoplus_{i\in S^{+}} \mathbb{O}$ (respectively, $\mathbb{O}\conjassoc(M)\cong\bigoplus_{i\in S^{-}} \overline{\mathbb{O}}$), and any element $m\in M$ can be written uniquely as an $\mathbb{O}$-linear combination of the elements of $S:=S^{+}\cup S^{-}$.

The pair
\[
(\dim_{\mathbb{R}}\assoc(M),\dim_{\mathbb{R}}\conjassoc(M))
\]
uniquely determines the module up to isomorphism and is called the \textbf{type} of the module.
We shall denote $M^{+}=\mathbb{O}\assoc(M)$ and $M^{-}=\mathbb{O}\conjassoc(M)$.
\end{theorem}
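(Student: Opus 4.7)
The plan is to reduce the statement to the structure theorem for Clifford modules and then identify $\assoc$ and $\conjassoc$ with the invariants that theorem already provides. Via the equivalence of categories in Lemma~2.2, $M$ becomes a left $\Cl_7(\mathbb{R})$-module, and the decomposition lemma together with the identification of the two simple modules in the preceding corollary gives a left $\mathbb{O}$-module isomorphism
\[
\varphi\colon M\xrightarrow{\sim}\mathbb{O}^{\oplus a}\oplus\overline{\mathbb{O}}^{\oplus b},
\]
where $a=\dim_{\mathbb{R}} U$ and $b=\dim_{\mathbb{R}} V$ are uniquely determined by $M$. This already shows that a single pair of non-negative integers classifies $M$ up to isomorphism; what remains is to identify that pair with $(\dim_{\mathbb{R}}\assoc(M),\dim_{\mathbb{R}}\conjassoc(M))$ and to extract the direct-sum decomposition.

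\textbf{Computing the two subspaces.} Both defining conditions are $\mathbb{R}$-linear in $m$, so both constructions commute with arbitrary direct sums; it therefore suffices to compute them on the simple summands. For $\assoc(\mathbb{O})$ the condition $[p,q,m]=0$ for all $p,q\in\mathbb{O}$ cuts out precisely the (left) nucleus of $\mathbb{O}$, classically equal to $\mathbb{R}\cdot 1$. For $\conjassoc(\mathbb{O})$, expanding the defining identity via the alternating rule rewrites it as $(pq-qp)m=[p,q,m]$, and a direct coordinate check against $m=\sum_{k=0}^{7}m_k e_k$ with specific pairs $p=e_i,\ q=e_j$ forces every $m_k$ to vanish. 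The conjugation $p\star m=\overline{p}m$ swaps the two roles on $\overline{\mathbb{O}}$, yielding $\assoc(\overline{\mathbb{O}})=0$ and $\conjassoc(\overline{\mathbb{O}})=\mathbb{R}\cdot 1$. Transporting through $\varphi$ and using direct-sum compatibility gives $\dim_{\mathbb{R}}\assoc(M)=a$ and $\dim_{\mathbb{R}}\conjassoc(M)=b$, with $\assoc(M)$ sitting as the real diagonal of the $\mathbb{O}^{\oplus a}$ summand, so that $\mathbb{O}\assoc(M)=\varphi^{-1}(\mathbb{O}^{\oplus a})$ and $\mathbb{O}\conjassoc(M)=\varphi^{-1}(\overline{\mathbb{O}}^{\oplus b})$, which immediately yields $M=\mathbb{O}\assoc(M)\oplus\mathbb{O}\conjassoc(M)$.

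\textbf{Basis independence.} For any real basis $S^{+}=\{m_1,\dots,m_a\}$ of $\assoc(M)$, I would express each $m_i$ in the standard basis of $\mathbb{R}^{a}\subset\mathbb{O}^{\oplus a}$ through an invertible real matrix $A$. The evaluation map $\bigoplus_{i\in S^{+}}\mathbb{O}\to\mathbb{O}\assoc(M)$ sending $(q_i)\mapsto\sum q_i m_i$ is then the $\mathbb{O}$-linear extension of $A$; because real scalars are central and strictly associative inside $\mathbb{O}$, this extension is an $\mathbb{O}$-module isomorphism precisely when $A$ is invertible over $\mathbb{R}$. The same argument with $\overline{\mathbb{O}}$ in place of $\mathbb{O}$ handles $S^{-}$, and uniqueness of the joint expansion over $S=S^{+}\cup S^{-}$ is then immediate from the direct sum established in the previous paragraph.

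\textbf{Main obstacle.} I expect the sharpest point to be the explicit verification that $\conjassoc(\mathbb{O})=0$ (and the symmetric statement on $\overline{\mathbb{O}}$). Unlike $\assoc(\mathbb{O})=\mathbb{R}\cdot 1$, which reduces cleanly to the nucleus computation, the conjugate-associative condition mixes a commutator term $(pq-qp)m$ with an associator term $[p,q,m]$ that are individually nonzero on various coordinates of $m$; ruling out cancellations requires a small but genuinely computational check against the octonionic multiplication table. Everything else is linear algebra driven by the Clifford module structure theorem.
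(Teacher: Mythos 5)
The paper itself does not prove Theorem~\ref{thm:decomp}: it is imported wholesale from Huo--Li--Ren (their 3.10, 3.13, 4.5), with the surrounding lemmas (2.2--2.6) supplied as background so that the reader can recover it. Your proposal is, in effect, a reconstruction of that proof along exactly the route those lemmas suggest: pass to $\Cl_7(\mathbb{R})$-modules, invoke the Morita decomposition $M\cong\mathbb{O}^{\oplus a}\oplus\overline{\mathbb{O}}^{\oplus b}$, compute $\assoc$ and $\conjassoc$ on the two simple modules, and transport back. That is the intended approach, and the argument is correct.

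A few points worth tightening. The claim that $\assoc$ and $\conjassoc$ commute with arbitrary (possibly infinite) direct sums is fine because every element of $\bigoplus M_i$ has finite support and both conditions are checked coordinatewise under the diagonal $L_p$ action. The computation $\assoc(\mathbb{O})=\mathbb{R}$ is the classical nucleus of $\mathbb{O}$, and your rewriting of the conjugate-associative condition as $(pq-qp)m=[p,q,m]$ is the right normal form for the finite check that $\conjassoc(\mathbb{O})=0$; taking $p=e_1,q=e_2$ already forces $2(e_1e_2)m=[e_1,e_2,m]$, which a short sweep over basis vectors rules out. The swap $\assoc(\overline{\mathbb{O}})=0$, $\conjassoc(\overline{\mathbb{O}})=\mathbb{R}$ follows by substituting $\overline{p},\overline{q}$ for $p,q$, as you say. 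For basis independence, the $\mathbb{O}^{\oplus a}$ case is as you describe; the $\overline{\mathbb{O}}^{\oplus b}$ case is \emph{not} literally ``the same argument'' — to make the evaluation map $\bigoplus\overline{\mathbb{O}}\to\mathbb{O}\conjassoc(M)$ $\mathbb{O}$-linear with respect to the $\star$ action you need to send $(q_i)\mapsto\sum\overline{q_i}\,n_i$ (the conjugate coefficient), using $(\overline{q}p)n=p(\overline{q}n)$ for $n\in\conjassoc(M)$. This twist does not affect the uniqueness-of-expansion claim in the theorem statement, since conjugation is a real-linear bijection of $\mathbb{O}$, but it should be made explicit so the $\mathbb{O}$-module isomorphism $\mathbb{O}\conjassoc(M)\cong\bigoplus_{i\in S^-}\overline{\mathbb{O}}$ actually holds as stated.
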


While not included in the statement of 4.5 in \cite{huo2019}, their proof also shows that $S$ can be chosen as the union of an arbitrary basis of the real vector spaces $\assoc(M)$ and $\conjassoc(M)$, a fact that we will rely on later on.

An important and noteworthy corollary not stated explicitly in \cite{huo2019} is the following.

\begin{corollary}
\label{thm:assoc}
When writing a left $\Cl_7(\mathbb{R})$-module as $M=U^{\oplus 8}\oplus V^{\oplus 8}$, we may identify $\assoc(M)$ with $U$ (or more precisely, $(U\oplus 0^{\oplus 7})\oplus 0^{\oplus 8}$) and $\conjassoc(M)$ with $V$ (or more precisely, $0^{\oplus 8}\oplus (U\oplus 0^{\oplus 7})$).
Furthermore, we may identify $U^{\oplus 8}$ with $U\oplus \bigoplus_{i=1}^7 e_iU$ and $V^{\oplus 8}$ with $V\oplus \bigoplus_{i=1}^7 e_iV$.
\end{corollary}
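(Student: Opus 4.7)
The plan is to combine Theorem~\ref{thm:decomp} with the semisimple structure of $\Cl_7(\mathbb{R})$-modules and the identification of the simple summands as $\mathbb{O}$ and $\overline{\mathbb{O}}$, in order to pin down which subspaces of $U^{\oplus 8} \oplus V^{\oplus 8}$ correspond to $\assoc(M)$ and $\conjassoc(M)$.

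First, I would observe that $\assoc(\cdot)$ and $\conjassoc(\cdot)$ commute with arbitrary direct sums of left $\mathbb{O}$-modules; this is immediate from their pointwise definition, since an element of a direct sum satisfies the required vanishing condition iff each of its components does. Combining this with Theorem~\ref{thm:decomp} and the facts $\assoc(\mathbb{O}) = \mathbb{R}\cdot 1$, $\conjassoc(\mathbb{O}) = 0$, $\assoc(\overline{\mathbb{O}}) = 0$, and $\conjassoc(\overline{\mathbb{O}}) = \mathbb{R}\cdot 1$ (recalled in the introduction), I obtain that $\assoc(M)$ is the span of the units across the component copies of $\mathbb{O}$ in $M^+$, and symmetrically for $\conjassoc(M)$ in $M^-$.

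Next, I would align the two decompositions of $M$. Since $\mathbb{O}$ realizes the simple $\Cl_7(\mathbb{R})$-module $(\mathbb{R}^8,0)$ and $\overline{\mathbb{O}}$ realizes $(0,\mathbb{R}^8)$, Theorem~\ref{thm:decomp} forces $M^+ = (U^{\oplus 8},0)$ with $\dim_{\mathbb{R}} U = |S^+|$ and $M^- = (0,V^{\oplus 8})$ with $\dim_{\mathbb{R}} V = |S^-|$. Under the octonionic basis $1,e_1,\ldots,e_7$, each copy of $\mathbb{O}$ in $M^+$ splits as $\mathbb{R}\cdot 1 \oplus e_1\mathbb{R} \oplus \cdots \oplus e_7\mathbb{R}$, the first summand being its associative part. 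Gathering the first summands across $i \in S^+$ produces precisely $\assoc(M)$ from the previous step, now sitting as a single copy of $U$ in the first slot of $U^{\oplus 8}$; gathering the $e_k$-summands produces the copy $e_k U$. This simultaneously gives $\assoc(M) \cong U$ and the decomposition $U^{\oplus 8} \cong U \oplus \bigoplus_{k=1}^7 e_k U$, and the symmetric argument handles $\conjassoc(M) \cong V$ and $V^{\oplus 8} \cong V \oplus \bigoplus_{k=1}^7 e_k V$.

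The main obstacle I expect is verifying that the octonionic-basis decomposition and the Morita decomposition $U^{\oplus 8}$ genuinely align — that is, that the subspaces $U, e_1 U, \ldots, e_7 U$ of $M^+$ are $\mathbb{R}$-linearly independent and together exhaust $M^+$. Linear independence follows because $\mathbb{O}$ itself is a faithful model of the simple Clifford module in which $L_1, L_{e_1}, \ldots, L_{e_7}$ act independently on $\mathbb{R}\cdot 1$, and this is inherited by $M^+ = \bigoplus_{i \in S^+}\mathbb{O}$ one summand at a time. Spanning is then just the statement that each copy of $\mathbb{O}$ equals $\mathbb{R}\cdot 1 \oplus \bigoplus_{k=1}^7 e_k \mathbb{R}\cdot 1$. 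Once this alignment is granted, the corollary follows directly by assembling the two steps.
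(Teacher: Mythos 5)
The paper states this corollary without giving any proof, so there is no authorial argument to compare against; your proof is correct and is the natural route suggested by the paper's own organization, combining Theorem~\ref{thm:decomp} with the Morita picture and the facts $\assoc(\mathbb{O})=\mathbb{R}\cdot 1$, $\conjassoc(\mathbb{O})=0$, $\assoc(\overline{\mathbb{O}})=0$, $\conjassoc(\overline{\mathbb{O}})=\mathbb{R}\cdot 1$. The alignment concern you raise is exactly the right thing to worry about and is resolved as you indicate: fixing the identification $\mathbb{O}\cong\mathbb{R}^8$ of simple $\Cl_7(\mathbb{R})$-modules via the ordered basis $1,e_1,\ldots,e_7$ before invoking Morita equivalence sends $\assoc\bigl(\bigoplus_{S^+}\mathbb{O}\bigr)=\bigoplus_{S^+}\mathbb{R}\cdot 1$ onto the first slot $U\oplus 0^{\oplus 7}$ and $e_k$-multiples onto the $k$-th slot, giving both identifications in the statement simultaneously.
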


\begin{definition}
(\cite{huo2019}, 4.9)
An element $m$ of a left $\mathbb{O}$-module $M$ is referred to as \textbf{cyclic} if the generated cyclic submodule $\langle m\rangle_{\mathbb{O}}$ is $\mathbb{O}m$.
Equivalently, it is cyclic if $\dim_{\mathbb{R}}\langle m\rangle_{\mathbb{O}}=8$, therefore isomorphic to either $\mathbb{O}$ or $\overline{\mathbb{O}}$.
We define
\[
\cyclplus(M):=\{m\in\cycl(M)\mid \langle m\rangle_{\mathbb{O}}\cong\mathbb{O}\}\cup\{0\}
\]
\[
\cyclminus(M):=\{m\in\cycl(M)\mid \langle m\rangle_{\mathbb{O}}\cong\overline{\mathbb{O}}\}\cup\{0\}
\]
\[
\cycl(M):=\cyclplus(M)\cup\cyclminus(M)
\]
\end{definition}

\begin{theorem}
(\cite{huo2019}, 4.16)
\label{thm:cyclid}
We have the following identifications:
\[
\cyclplus(M)=\bigcup_{p\in\mathbb{O}} p\cdot\assoc(M)
\qquad
\cyclminus(M)=\bigcup_{p\in\mathbb{O}} p\cdot\conjassoc(M).
\]
\end{theorem}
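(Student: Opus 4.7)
The plan is to verify each identification by proving the two inclusions. I argue in detail for $\cyclplus$ and note that the $\cyclminus$ case runs parallel, with the conjugation of $\overline{\mathbb{O}}$ threaded through at the appropriate places.

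For the inclusion $\bigcup_{p\in\mathbb{O}} p\cdot\assoc(M)\subseteq\cyclplus(M)$, I would fix $a\in\assoc(M)$ and $p\in\mathbb{O}$ and show that $pa$ is either zero (hence in $\cyclplus(M)$ by convention) or cyclic of type $+$. The key observation is that $\phi_a\colon\mathbb{O}\to M$, $q\mapsto qa$, is a left $\mathbb{O}$-module homomorphism precisely because $a$ is associative: $q(ra)=(qr)a$. Since $\mathbb{O}$ is a simple left $\mathbb{O}$-module, $\phi_a$ is either zero or injective, in the latter case with image the submodule $\mathbb{O}a\cong\mathbb{O}$. When $p\ne 0$, the alternative identity gives $p^{-1}(pa)=a$: indeed $p^{-1}=\bar p/|p|^2$ lies in the real span of $1$ and $p$, so $[p^{-1},p,a]$ is a real combination of $[1,p,a]=0$ and $[p,p,a]=0$ by the alternating rule. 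Hence $a\in\mathbb{O}(pa)$, so $\langle pa\rangle_{\mathbb{O}}=\mathbb{O}(pa)=\mathbb{O}a\cong\mathbb{O}$ and $pa\in\cyclplus(M)$.

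For the reverse inclusion, I would take nonzero $m\in\cyclplus(M)$ and pick any $\mathbb{O}$-module isomorphism $\psi\colon\mathbb{O}\to\langle m\rangle_{\mathbb{O}}$. Setting $a:=\psi(1)$, left $\mathbb{O}$-linearity gives $\psi(q)=qa$, and the identity $\psi(pq)=p\,\psi(q)$ then reads $(pq)a=p(qa)$, so $a\in\assoc(M)$. Since $m\in\langle m\rangle_{\mathbb{O}}=\mathbb{O}a$, we obtain $m=pa$ for some $p\in\mathbb{O}$, as desired.

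The $\cyclminus$ version follows the same recipe, but requires attention to the conjugation. For the forward direction the relevant $\mathbb{O}$-linear map is $\psi\colon\overline{\mathbb{O}}\to M$, $x\mapsto\bar x a$, whose $\mathbb{O}$-linearity amounts exactly to the conjugate associativity $(\bar x r)a=r(\bar x a)$; for the reverse, an isomorphism $\psi\colon\overline{\mathbb{O}}\to\langle m\rangle_{\mathbb{O}}$ with $a:=\psi(1)$ satisfies $\psi(\bar q)=qa$, and feeding $x=\bar q$ into $\psi(r\star x)=r\,\psi(x)$ transcribes into $(qp)a=p(qa)$, the defining property of $\conjassoc(M)$. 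The main (and mild) obstacle throughout is to confirm that $\mathbb{O}m$ is genuinely a submodule, so that $\langle m\rangle_{\mathbb{O}}=\mathbb{O}m$; this in turn boils down to a one-line associator computation using (conjugate) associativity of the generator $a$, so no surprises are expected.
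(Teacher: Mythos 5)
The paper states this result as a citation of Huo, Li and Ren (their Theorem~4.16) and, per the preamble of Section~2, deliberately omits its proof; so there is no in-paper argument to compare yours against. Evaluated on its own terms, your proof is correct and self-contained, and it gives exactly the right mechanism: the map $q\mapsto qa$ is $\mathbb{O}$-linear from $\mathbb{O}$ precisely when $a\in\assoc(M)$, simplicity of $\mathbb{O}$ makes it injective or zero, and the module-level alternating rule gives $p^{-1}(pa)=a$, so $\langle pa\rangle_{\mathbb{O}}=\mathbb{O}a\cong\mathbb{O}$; conversely evaluating a chosen isomorphism $\mathbb{O}\to\langle m\rangle_{\mathbb{O}}$ at $1$ produces an associative generator. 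The $\cyclminus$ case is handled with the correct twist: $x\mapsto\bar xa$ is the right map $\overline{\mathbb{O}}\to M$, and its $\mathbb{O}$-linearity transcribes to $(\bar xp)a=p(\bar xa)$, which is conjugate associativity of $a$.

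Two small expositional points are worth tightening. First, you should say explicitly that $\mathbb{O}a$ is a submodule: $q(ra)=(qr)a$ when $a\in\assoc(M)$, and $q(ra)=(rq)a$ when $a\in\conjassoc(M)$; this is what licenses $\langle pa\rangle_{\mathbb{O}}\subseteq\mathbb{O}a$ and what you call the ``one-line associator computation'' at the end, so it belongs in the forward direction, not as an afterthought. Second, the displayed chain $\langle pa\rangle_{\mathbb{O}}=\mathbb{O}(pa)=\mathbb{O}a$ asserts the middle equality before it is earned; what you actually have is $a\in\mathbb{O}(pa)\subseteq\langle pa\rangle_{\mathbb{O}}$, and since $\mathbb{O}a$ is a submodule containing $a$ and contained in any submodule containing $pa$, the two outer sets coincide and the middle equality follows. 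Either rephrase the chain or add the line $qa=(qp^{-1})(pa)$ (again a real combination of alternators) to get $\mathbb{O}a\subseteq\mathbb{O}(pa)$ directly. Neither issue is a genuine gap, only wording.
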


\begin{theorem}
(\cite{huo2019}, 4.17)
For a left $\mathbb{O}$-module $M$
\[
M=\Span_{\mathbb{R}}\cycl(M)=\Span_{\mathbb{R}}\cyclplus(M)\oplus \Span_{\mathbb{R}}\cyclminus(M).
\]
\end{theorem}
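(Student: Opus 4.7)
The plan is to combine Theorem \ref{thm:decomp} with Theorem \ref{thm:cyclid} directly. I would first show that $\Span_{\mathbb{R}}\cyclplus(M)=M^{+}$ and $\Span_{\mathbb{R}}\cyclminus(M)=M^{-}$; once this is in hand, the theorem follows immediately from the direct sum decomposition $M=M^{+}\oplus M^{-}$ guaranteed by Theorem \ref{thm:decomp}.

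For the equality $\Span_{\mathbb{R}}\cyclplus(M)=M^{+}$, the inclusion $\subseteq$ is immediate from Theorem \ref{thm:cyclid}: every element of $\cyclplus(M)$ is of the form $p\cdot a$ with $p\in\mathbb{O}$ and $a\in\assoc(M)$, so it lies in $M^{+}=\mathbb{O}\assoc(M)$, and taking real spans preserves this containment. For the reverse inclusion $\supseteq$, any $m^{+}\in M^{+}$ can be expanded, by Theorem \ref{thm:decomp}, as a finite sum $\sum_{i}p_{i}a_{i}$ with $p_{i}\in\mathbb{O}$ and $a_{i}\in\assoc(M)$; the key observation is that each individual summand $p_{i}a_{i}$ lies in $p_{i}\cdot\assoc(M)\subseteq\cyclplus(M)$ by Theorem \ref{thm:cyclid}, so $m^{+}$ is automatically a finite real-linear combination of cyclic elements (with coefficients all equal to $1$). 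The argument for $\cyclminus$ is symmetric, replacing $\assoc(M)$ with $\conjassoc(M)$.

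Assembling the pieces, $\Span_{\mathbb{R}}\cycl(M)=\Span_{\mathbb{R}}\cyclplus(M)+\Span_{\mathbb{R}}\cyclminus(M)=M^{+}+M^{-}=M$, and the sum is direct because the two real spans sit inside the two summands of the pre-existing direct sum $M^{+}\oplus M^{-}$, so their intersection is trivial. This also gives the first equality $M=\Span_{\mathbb{R}}\cycl(M)$ simultaneously with the decomposition.

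There is no significant obstacle here: the statement is essentially a formal consequence of the two preceding theorems. The one subtlety worth being explicit about is that one should not conflate "spanned by cyclic elements over $\mathbb{R}$" with "generated by associative or conjugate associative elements over $\mathbb{O}$"; the bridge between the two is precisely the remark that each $\mathbb{O}$-scalar multiple $p_{i}a_{i}$ appearing in the expansion of Theorem \ref{thm:decomp} is itself a single element of $\cyclplus(M)$, so no passage from an $\mathbb{O}$-span to an $\mathbb{R}$-span is actually required.
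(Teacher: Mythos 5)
The paper does not actually prove this theorem: it is quoted verbatim from \cite{huo2019} (Theorem 4.17) in the background section, where the reader is referred to that paper for the proof, so there is no in-paper argument to compare against. Your derivation is nonetheless correct and is the natural route given the two ingredients at hand: Theorem \ref{thm:cyclid} gives $\cyclplus(M)=\bigcup_{p}p\cdot\assoc(M)$, so on one hand $\Span_{\mathbb{R}}\cyclplus(M)\subseteq\mathbb{O}\assoc(M)=M^{+}$, and on the other hand each summand $p_{i}a_{i}$ in an $\mathbb{O}$-linear expansion of an element of $M^{+}$ is itself already a single element of $\cyclplus(M)$, giving the reverse inclusion; the symmetric argument handles $\cyclminus$, and the directness of the sum is inherited from $M^{+}\oplus M^{-}$ in Theorem \ref{thm:decomp}. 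The one observation you correctly isolate as the crux---that no genuine passage from an $\mathbb{O}$-span to an $\mathbb{R}$-span is needed because each monomial $p_{i}a_{i}$ is cyclic on its own---is exactly what makes the statement an essentially formal consequence of \ref{thm:decomp} and \ref{thm:cyclid}, and it is presumably the same mechanism underlying the original proof in \cite{huo2019}.
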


\begin{lemma}
(\cite{huo2019}, 4.6)
\label{thm:findim}
Given an arbitrary left $\mathbb{O}$-module $M$ and an element $m\in M$, the real dimension of the generated submodule $\langle m\rangle$ is finite and at most $128$.
\end{lemma}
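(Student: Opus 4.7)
The plan is to invoke the category equivalence between left $\mathbb{O}$-modules and left $\Cl_7(\mathbb{R})$-modules stated earlier, under which the $\mathbb{O}$-submodule $\langle m\rangle$ corresponds to the $\Cl_7(\mathbb{R})$-submodule $\Cl_7(\mathbb{R})\cdot m$. Since $\dim_{\mathbb{R}}\Cl_7(\mathbb{R})=2^7=128$, the evaluation surjection $\alpha\mapsto\alpha m$ from $\Cl_7(\mathbb{R})$ onto $\Cl_7(\mathbb{R})\cdot m$ will immediately bound $\dim_{\mathbb{R}}\langle m\rangle$ above by $128$.

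The key step is to identify $\langle m\rangle$ with $\Cl_7(\mathbb{R})\cdot m$ explicitly. By definition, $\langle m\rangle$ is the real span of all iterated products $L_{p_1}L_{p_2}\cdots L_{p_k}m$ with $p_i\in\mathbb{O}$. Expanding each $p_i$ in the basis $1,e_1,\ldots,e_7$ writes $L_{p_i}$ as a real linear combination of $I,L_{e_1},\ldots,L_{e_7}$, so $\langle m\rangle$ lies in the real span of products $L_{e_{j_1}}\cdots L_{e_{j_k}}m$. I would then invoke the Clifford relations $L_{e_i}L_{e_j}+L_{e_j}L_{e_i}=-2\delta_{ij}I$ to reduce any such monomial, up to sign, to a product with strictly increasing indices, of which there are exactly $2^7=128$ (including the empty product $I$). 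The reverse inclusion is immediate since each $L_{e_i}$ arises from $e_i\in\mathbb{O}$, and $\langle m\rangle$ contains $m$ and is closed under both $\mathbb{R}$-linear combinations and left multiplication by octonions.

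There is essentially no substantial obstacle: once one recognises that $\langle m\rangle$ is a homomorphic image of the finite-dimensional algebra $\Cl_7(\mathbb{R})$ acting on $m$, both finiteness and the explicit bound $128$ fall out automatically. The only point demanding a moment's care is the bookkeeping showing that iterated octonionic left multiplications generate no more than the Clifford algebra, which in turn rests on the alternating rule built into the definition of an $\mathbb{O}$-module.
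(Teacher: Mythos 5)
Your proof is correct: identifying $\langle m\rangle$ with $\Cl_7(\mathbb{R})\cdot m$ via the Clifford-module equivalence, and bounding its dimension by $\dim_{\mathbb{R}}\Cl_7(\mathbb{R})=128$ through the reduction of monomials in $L_{e_1},\dots,L_{e_7}$ to strictly increasing products, is exactly the natural argument. The paper itself states this lemma as a citation to \cite{huo2019} without reproducing the proof, but your reasoning is the expected one given the Clifford-algebra framework already set up in Section~2.
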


\begin{definition}
Given an element $m$ in a left $\mathbb{O}$-module $M$, let us denote its decomposition in $M=M^{+}\oplus M^{-}$ by $m=m^{+}\oplus m^{-}$, and write them as sums
\[
m=\left(\sum_{i=1}^n m_i^{+}\right)\oplus\left(\sum_{i=1}^{n'} m_i^{-}\right)
\]
where $m_i^{+}\in\cyclplus(M)$ and $m_i^{-}\in\cyclminus(M)$.
There is a smallest number $\ell_m^{+}$ for $n$ and $\ell_m^{-}$ for $n'$ where this decomposition is possible, and the pair $(\ell_m^{+},\ell_m^{-})$ shall be referred to as the \textbf{length of the element $m$}.
\end{definition}

We finally arrive at the final conjecture of Huo, Li and Ren.

\begin{theorem}
(\cite{huo2019}, 4.18)
Let $M$ be a left $\mathbb{O}$-module and $m\in M$ be an element of length $(\ell_m^{+},\ell_m^{-})$.
Then
\[
\langle m\rangle_{\mathbb{O}}\cong \mathbb{O}^{\ell_m^{+}}\oplus \overline{\mathbb{O}}^{\ell_m^{-}}
\]
\end{theorem}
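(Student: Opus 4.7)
The plan is to first reduce to the case where $m$ lies entirely in $M^{+}$ or entirely in $M^{-}$, by exhibiting $m^{+}$ and $m^{-}$ as elements of $\langle m\rangle_{\mathbb{O}}$ via central idempotents of $\Cl_7(\mathbb{R})$. The submodule $N:=\langle m\rangle_{\mathbb{O}}$ is closed under each $L_{e_i}$, hence under the full Clifford algebra generated by them. Using $\Cl_7(\mathbb{R})\cong M(8,\mathbb{R})\oplus M(8,\mathbb{R})$, the two central idempotents $\epsilon^{\pm}$ act on $M$ precisely as the projections onto $M^{\pm}$, so $m^{\pm}=\epsilon^{\pm}m\in N$. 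Consequently $N=\langle m^{+}\rangle_{\mathbb{O}}\oplus\langle m^{-}\rangle_{\mathbb{O}}$, and by the symmetric roles of $\mathbb{O}$ and $\overline{\mathbb{O}}$ it suffices to prove $\langle m\rangle_{\mathbb{O}}\cong\mathbb{O}^{\ell}$ when $m\in M^{+}$, where I write $\ell:=\ell_m^{+}$.

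\medskip

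In that case, Corollary \ref{thm:assoc} furnishes a unique expansion $m=u_0+\sum_{i=1}^{7}e_iu_i$ with $u_0,\dots,u_7\in\assoc(M)$. Set $W:=\Span_{\mathbb{R}}(u_0,\dots,u_7)$. Then $\mathbb{O}W$ is a submodule of type $(\dim_{\mathbb{R}}W,0)$ by Theorem \ref{thm:decomp} and contains $m$, so $N\subseteq\mathbb{O}W$. Conversely, applying Corollary \ref{thm:assoc} to the $\mathbb{O}$-module $N\subseteq M^{+}$ gives $N=\assoc(N)\oplus\bigoplus_{i=1}^{7}e_i\assoc(N)$, and the uniqueness of the coordinate expansion of $m$ inside the ambient $M^{+}$ forces each $u_i\in\assoc(N)$, whence $\mathbb{O}W\subseteq N$. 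Thus $N\cong\mathbb{O}^{\dim_{\mathbb{R}}W}$, and it remains to verify $\dim_{\mathbb{R}}W=\ell$. By Theorem \ref{thm:cyclid}, every $n$-term decomposition of $m$ into cyclic positive elements has the form $m=\sum_{j=1}^{n}p_ja_j$ with $p_j\in\mathbb{O}$ and $a_j\in\assoc(M)$; writing $p_j=\sum_{i=0}^{7}c_{ij}e_i$ with the convention $e_0:=1$ and comparing with the unique expansion of $m$ yields $u_i=\sum_j c_{ij}a_j$, so $W\subseteq\Span_{\mathbb{R}}(a_1,\dots,a_n)$ and $\dim_{\mathbb{R}}W\leq\ell$. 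For the reverse inequality, if $w_1,\dots,w_k$ is a basis of $W$ and $u_i=\sum_j c_{ij}w_j$, the explicit formula $m=\sum_{j=1}^{k}\bigl(\sum_{i=0}^{7}c_{ij}e_i\bigr)w_j$ exhibits $m$ as a sum of $k$ cyclic positive elements, giving $\ell\leq k=\dim_{\mathbb{R}}W$.

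\medskip

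The main obstacle I expect is the reduction step. While intuitively the contributions from $M^{+}$ and $M^{-}$ ought to be independent, it is not at all evident from the $\mathbb{O}$-module axioms alone that $m^{+}$ itself belongs to $\langle m\rangle_{\mathbb{O}}$; the argument essentially requires recognizing $\langle m\rangle_{\mathbb{O}}$ as a $\Cl_7(\mathbb{R})$-submodule and exploiting the block decomposition of $\Cl_7(\mathbb{R})$ to extract the central idempotent implementing the projection to $M^{+}$. Once that reduction is secured, everything else is linear bookkeeping in the explicit coordinates provided by Corollary \ref{thm:assoc}.
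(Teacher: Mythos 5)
Your proof is correct but follows a genuinely different route from the paper's. The paper's argument starts directly from the length decomposition $m=\sum_i p_ie_i\oplus\sum_j q_jf_j$ furnished by Theorem~\ref{thm:cyclid}, forms the submodule $M':=\sum_i\mathbb{O}e_i\oplus\sum_j\mathbb{O}f_j$, uses Theorem~\ref{thm:submod} to read off its type as $(n',k')\le(\ell_m^{+},\ell_m^{-})$, and then invokes the ``max type'' Lemma~\ref{thm:maxtype} twice --- once to force $(n',k')=(\ell_m^{+},\ell_m^{-})$ and once more to conclude $\langle m\rangle_{\mathbb{O}}=M'$. You instead perform an explicit reduction via the central idempotents of $\Cl_7(\mathbb{R})$ (which the paper never spells out but which is sound, since any left $\mathbb{O}$-submodule is automatically a $\Cl_7(\mathbb{R})$-submodule), then switch to the real coordinate picture of Corollary~\ref{thm:assoc}: you identify $N$ with $\mathbb{O}W$, where $W$ is the span of the coefficients $u_0,\dots,u_7$, and show $\dim_{\mathbb{R}}W=\ell$ by comparing the two indexings of the coefficient matrix. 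That last computation is in substance the row-rank-equals-column-rank observation that also underlies the paper's later Lemma~\ref{thm:typeisrank}, so your argument effectively bundles the theorem with a proof of (the positive half of) \ref{thm:typeisrank}, whereas the paper proves the theorem first and \ref{thm:typeisrank} afterwards. The trade-off: your route avoids Lemma~\ref{thm:maxtype} entirely but leans more heavily on the explicit $M(8,\mathbb{R})\oplus M(8,\mathbb{R})$ picture. One small citation slip: where you write that $\mathbb{O}W$ is a submodule of type $(\dim_{\mathbb{R}}W,0)$ ``by Theorem~\ref{thm:decomp},'' the fact actually needed is $\assoc(\mathbb{O}W)=W$, which is Theorem~\ref{thm:submod}, not \ref{thm:decomp}.
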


In the next section, we provide a proof for this statement.

\section{Cyclic left $\mathbb{O}$-modules}

First notice that given a left $\mathbb{O}$-module, the set of its left submodules is in a one-to-one correspondence with pairs of real subspaces of $\assoc(M)$ and $\conjassoc(M)$.

\begin{theorem}
\label{thm:submod}
Let $M$ be a left $\mathbb{O}$-module and $U\le\assoc(M)$ and $V\le\conjassoc(M)$ real subspaces.
Define $M':=\mathbb{O}U\oplus\mathbb{O}V$ as the generated submodule.
Then $\assoc(M')=U$ and $\conjassoc(M')=V$.
\end{theorem}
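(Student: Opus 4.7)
My plan is to establish both inclusions $U\subseteq\assoc(M')$ and $\assoc(M')\subseteq U$, with an entirely parallel treatment for $V$ and $\conjassoc(M')$. The forward inclusion is essentially definitional: the associator $[p,q,m]=(pq)m-p(qm)$ is computed from the $\mathbb{O}$-action on $m$, which is identical whether we view $m$ inside $M$ or inside the submodule $M'$. Consequently $\assoc(M')=\assoc(M)\cap M'$, so $U\subseteq\assoc(M)\cap M'=\assoc(M')$ immediately.

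For the reverse inclusion I will use the direct sum structure of Theorem~\ref{thm:decomp} twice. First, $\assoc(M)\subseteq M^{+}$, because $M^{+}=\mathbb{O}\assoc(M)$ contains $1\cdot\assoc(M)$. Second, every $x\in M'$ can be written as $x=u+v$ with $u\in\mathbb{O}U\subseteq M^{+}$ and $v\in\mathbb{O}V\subseteq M^{-}$; the directness of $M=M^{+}\oplus M^{-}$ then forces $M'\cap M^{+}=\mathbb{O}U$. Chaining these, $\assoc(M')=\assoc(M)\cap M'\subseteq M^{+}\cap M'=\mathbb{O}U$.

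It therefore suffices to verify $\assoc(M)\cap\mathbb{O}U=U$. Here I invoke Theorem~\ref{thm:decomp} applied directly to the module $\mathbb{O}U$: choosing a real basis $S_{U}$ of $U$, we have $\mathbb{O}U\cong\bigoplus_{s\in S_{U}}\mathbb{O}$, whose associative part, computed coordinate by coordinate in $\mathbb{O}$, is exactly $\Span_{\mathbb{R}}S_{U}=U$. Since associativity in $\mathbb{O}U$ agrees with associativity in the ambient $M$, we obtain $\assoc(M)\cap\mathbb{O}U=\assoc(\mathbb{O}U)=U$, completing the inclusion. The mirror argument using $\mathbb{O}V\cong\bigoplus_{s\in S_{V}}\overline{\mathbb{O}}$ and the identity $M'\cap M^{-}=\mathbb{O}V$ yields $\conjassoc(M')=V$.

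I do not anticipate any real obstacle: the only step beyond bookkeeping is the observation $M'\cap M^{+}=\mathbb{O}U$, which is forced by the fact that the two summands of $M'$ already lie in complementary summands of $M$. Everything else reduces to the direct-sum description of modules of types $(\dim U,0)$ and $(0,\dim V)$ already codified in Theorem~\ref{thm:decomp}.
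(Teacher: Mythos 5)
Your overall strategy is different from, and somewhat cleaner than, the paper's. The paper establishes $\assoc(M')\subseteq U$ by contradiction: it supposes $m\in\assoc(M')\setminus U$, writes $m=\sum p_iu_i+\sum q_iv_i$, extends $\{m,u_i\}$ and $\{v_i\}$ to bases of $\assoc(M)$ and $\conjassoc(M)$, and observes that $0=\sum p_iu_i+\sum q_iv_i - m$ then violates the uniqueness clause of Theorem~\ref{thm:decomp}. You instead isolate the two clean identities $\assoc(M')=\assoc(M)\cap M'$ and $M'\cap M^{+}=\mathbb{O}U$, reducing everything to the single claim $\assoc(M)\cap\mathbb{O}U=U$. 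Both arguments lean on Theorem~\ref{thm:decomp}; yours packages the uniqueness into a structural identification rather than a reductio.

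There is, however, a circularity in your last step that needs repair. You write that "Theorem~\ref{thm:decomp} applied directly to the module $\mathbb{O}U$" gives $\mathbb{O}U\cong\bigoplus_{s\in S_U}\mathbb{O}$. But Theorem~\ref{thm:decomp}, applied to $\mathbb{O}U$, decomposes it over a basis of $\assoc(\mathbb{O}U)$, not over a basis of $U$; asserting these coincide is precisely the statement under proof. The correct move is to apply Theorem~\ref{thm:decomp} (with the paper's subsequent remark that $S$ may be an arbitrary real basis of $\assoc(M)$ and $\conjassoc(M)$) to the \emph{ambient} module $M$, choosing $S^{+}\supseteq S_U$ and $S^{-}\supseteq S_V$. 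The resulting module isomorphism $M\cong\bigoplus_{s\in S^{+}}\mathbb{O}\oplus\bigoplus_{s\in S^{-}}\overline{\mathbb{O}}$ carries $\mathbb{O}U$ onto the subsum $\bigoplus_{s\in S_U}\mathbb{O}$, and since module isomorphisms preserve associative parts, $\assoc(\mathbb{O}U)$ is carried onto $\assoc\bigl(\bigoplus_{s\in S_U}\mathbb{O}\bigr)=\bigoplus_{s\in S_U}\mathbb{R}$, which corresponds to $\Span_{\mathbb{R}}S_U=U$. With that substitution your argument is complete, and the $V$ side is entirely parallel.
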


\begin{proof}
It can be checked by a quick verification that $M'$ is in fact a left $\mathbb{O}$-module.

By the nature of their definitions, $\assoc(M')\le\assoc(M)$ and $\conjassoc(M')\le\conjassoc(M)$ as real vector spaces.
Hence $U\subseteq\assoc(M')$ and $V\subseteq\conjassoc(M')$.

For any $m\in M'$ we have $m=\sum_i p_iu_i\oplus \sum_i q_iv_i$ for some linearly independent elements $u_i\in U$ and $v_i\in V$.
Assume that $m\in\assoc(M')\setminus U\subseteq\assoc(M)$, which means that the set $\{m,u_i\}$ is $\mathbb{R}$-linearly independent in $\assoc(M)$.
The set $\{m,u_i,v_i\}$ can be extended into a basis of $M$.
Then $0=\sum_i p_iu_i+\sum_i q_iv_i-m$ gives a non-trivial decomposition of $0$ in $M'$, which contradicts \ref{thm:decomp} for $M$.
\end{proof}

\begin{lemma}
\label{thm:maxtype}
Let $M$ be of type $(n,k)$ with $n$, $k$ finite, and an element $m\in M$ of length $(n',k')$.
Then $n'\le n$ and $k'\le k$.
\end{lemma}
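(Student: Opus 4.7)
The plan is to prove the two inequalities in parallel and symmetrically. I will argue $\ell_m^{+}\le n$ in detail, and the bound $\ell_m^{-}\le k$ will follow by swapping the roles of $\assoc$ and $\conjassoc$, and correspondingly $\cyclplus$ with $\cyclminus$. The main tool will be Theorem~\ref{thm:cyclid}, which identifies $\cyclplus(M)$ with $\mathbb{O}\cdot\assoc(M)$.

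To start, I will fix a minimal decomposition $m^{+}=\sum_{i=1}^{\ell_m^{+}} m_i^{+}$ with each $m_i^{+}\in\cyclplus(M)$. Using Theorem~\ref{thm:cyclid}, I rewrite each summand as $m_i^{+}=p_i a_i$ with $p_i\in\mathbb{O}$ and $a_i\in\assoc(M)$, and I discard any index for which $a_i=0$, since a zero summand could be deleted, contradicting minimality. The entire proof then reduces to showing that the finite set $\{a_1,\dots,a_{\ell_m^{+}}\}$ is $\mathbb{R}$-linearly independent in $\assoc(M)$; granting this, $\ell_m^{+}\le\dim_{\mathbb{R}}\assoc(M)=n$ is immediate.

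The only step I expect to require real care is the reduction argument showing independence. Suppose some $a_\ell=\sum_{i<\ell}\lambda_i a_i$ with $\lambda_i\in\mathbb{R}$. Because the module action is $\mathbb{R}$-bilinear, the map $x\mapsto p_\ell x$ is $\mathbb{R}$-linear and real scalars slide through multiplication by $p_\ell$, giving $p_\ell a_\ell=\sum_{i<\ell}(\lambda_i p_\ell)a_i$, and hence
\[
m^{+}=\sum_{i<\ell}\bigl(p_i+\lambda_i p_\ell\bigr)a_i.
\]
Each summand $(p_i+\lambda_i p_\ell)a_i$ lies in $\mathbb{O}\cdot\assoc(M)=\cyclplus(M)$, so this is a decomposition of $m^{+}$ into only $\ell_m^{+}-1$ cyclic elements, contradicting the definition of $\ell_m^{+}$. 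Running the identical argument with $\conjassoc$ and $\cyclminus$ in place of $\assoc$ and $\cyclplus$ yields $\ell_m^{-}\le k$. The only subtlety worth emphasising is closure under the recombination step, i.e.\ that $(p_i+\lambda_i p_\ell)a_i$ remains cyclic of the correct sign, which is immediate from Theorem~\ref{thm:cyclid}; beyond that, no further obstacle is anticipated.
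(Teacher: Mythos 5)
Your argument is correct, but it proceeds in the opposite direction from the paper's. The paper exhibits, directly from Theorem~\ref{thm:decomp}, a specific decomposition of $m$ with exactly $n$ summands of the form $p_ie_i\in\cyclplus(M)$ and $k$ summands of the form $q_if_i\in\cyclminus(M)$, where $\{e_i\}$ and $\{f_i\}$ are bases of $\assoc(M)$ and $\conjassoc(M)$; since length is a minimum, this single witness immediately gives $\ell_m^{+}\le n$ and $\ell_m^{-}\le k$, and the proof is done in three lines. You instead start from a \emph{minimal} decomposition $m^{+}=\sum p_ia_i$ and show the associative factors $a_i$ must be $\mathbb{R}$-linearly independent, otherwise a recombination $(p_i+\lambda_ip_\ell)a_i$ (still cyclic by Theorem~\ref{thm:cyclid}) contracts the sum — so the count is bounded by $\dim_\mathbb{R}\assoc(M)$. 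Both routes rest on the same two ingredients (Theorem~\ref{thm:cyclid} plus the finite-dimensionality of $\assoc(M)$), but the paper's is shorter because it needs no independence argument, while yours is slightly more informative: it shows that in any minimal decomposition the $a_i$ are automatically a linearly independent set, a fact the paper does not record. One cosmetic point: in the contraction step you should first reorder so that the dependent vector is the last one, or else also carry along the terms with $i>\ell$; as written, $m^{+}=\sum_{i<\ell}(p_i+\lambda_ip_\ell)a_i$ silently drops those.
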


\begin{proof}
By \ref{thm:decomp}, we may choose bases for the real vector spaces $e_1,\dots,e_n\in\assoc(M)$ and $f_1,\dots,f_k\in\conjassoc(M)$, and any $m\in M$ decomposes uniquely as
\[
m=\sum_ip_i e_i\oplus\sum_iq_i f_i
\]
By \ref{thm:cyclid}, all the terms $p_ie_i$ appear in $\cyclplus(M)$, and all the terms $q_if_i$ appear in $\cyclminus(M)$, and so this decomposition of $m$ gives an upper limit for its length as $(n,k)$.
\end{proof}

With these two lemmas, we can prove the conjecture in \cite{huo2019}.

\begin{theorem}
For an arbitrary module $M$ and element $m$ of length $(n,k)$, the generated submodule is of type $(n,k)$.
\end{theorem}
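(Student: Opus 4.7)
The plan is to prove both $n \le n_0$ and $n_0 \le n$, and similarly for the second coordinate, where $(n_0, k_0)$ denotes the type of $\langle m\rangle_{\mathbb{O}}$. By Lemma \ref{thm:findim} this submodule has finite real dimension, so its type is well-defined and finite. The first inequality is immediate: since $m \in \langle m\rangle_{\mathbb{O}}$ has length $(n, k)$, Lemma \ref{thm:maxtype} applied inside $\langle m\rangle_{\mathbb{O}}$ gives $n \le n_0$ and $k \le k_0$.

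For the reverse inequality, I would fix a length-realizing decomposition $m = \sum_{i=1}^{n} m_i^{+} \oplus \sum_{j=1}^{k} m_j^{-}$ with $m_i^{+}\in\cyclplus(M)$ and $m_j^{-}\in\cyclminus(M)$, and use Theorem \ref{thm:cyclid} to write $m_i^{+} = p_i a_i$ with $a_i \in \assoc(M)$ and $m_j^{-} = q_j b_j$ with $b_j \in \conjassoc(M)$. The first substantive observation is that minimality of $\ell_m^{+} = n$ forces $a_1, \ldots, a_n$ to be $\mathbb{R}$-linearly independent, and likewise for $b_1, \ldots, b_k$: if, say, $a_n = \sum_{i<n} c_i a_i$ with $c_i \in \mathbb{R}$, then $\mathbb{R}$-bilinearity gives $p_n a_n = \sum_{i<n}(c_i p_n) a_i$, whence $m^{+} = \sum_{i<n}(p_i + c_i p_n) a_i$ would be a sum of $n-1$ elements of $\cyclplus(M)$ (each term lies in $\mathbb{O} a_i \subseteq \cyclplus(M)$ by \ref{thm:cyclid}), contradicting minimality.

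With the $a_i$ and $b_j$ real-linearly independent, form the submodule $N := \mathbb{O}\Span_{\mathbb{R}}\{a_1, \ldots, a_n\} \oplus \mathbb{O}\Span_{\mathbb{R}}\{b_1, \ldots, b_k\}$. Then $m \in N$, hence $\langle m\rangle_{\mathbb{O}} \subseteq N$, and Theorem \ref{thm:submod} guarantees that $N$ has type exactly $(n, k)$. Because $\assoc$ and $\conjassoc$ are defined by element-wise identities, any submodule inclusion $M_1 \subseteq M_2$ yields $\assoc(M_1) \subseteq \assoc(M_2)$ and $\conjassoc(M_1) \subseteq \conjassoc(M_2)$; applied to $\langle m\rangle_{\mathbb{O}} \subseteq N$, this gives $n_0 \le n$ and $k_0 \le k$, closing the sandwich.

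The only nontrivial step is the linear-independence reduction above; once that is in place, Theorem \ref{thm:submod} does the heavy lifting, and Lemma \ref{thm:maxtype} furnishes the matching inequality from the other direction. I do not foresee a genuine obstacle beyond this bookkeeping, as the reduction is compatible with the characterization of $\cyclplus$ and $\cyclminus$ in Theorem \ref{thm:cyclid}.
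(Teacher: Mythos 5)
Your proof is correct and is essentially the same argument as the paper's: both construct the intermediate submodule $N = \mathbb{O}\Span_{\mathbb{R}}\{a_i\}\oplus\mathbb{O}\Span_{\mathbb{R}}\{b_j\}$ from a minimal decomposition of $m$, invoke Theorem~\ref{thm:submod} to read off its associative and conjugate associative parts, and sandwich the type of $\langle m\rangle_{\mathbb{O}}$ between $(n,k)$ and itself, with one direction coming from Lemma~\ref{thm:maxtype} applied to $\langle m\rangle_{\mathbb{O}}$. The only real divergence is cosmetic: the paper obtains the equality $\dim_{\mathbb{R}}\Span\{a_i\}=n$ indirectly, by noting that the type of $M'$ is $(n',k')\le(n,k)$ by construction while $m\in M'$ still has length $\ge(n,k)$, and then closing the gap with a second appeal to Lemma~\ref{thm:maxtype}; you instead prove the $\mathbb{R}$-linear independence of $a_1,\dots,a_n$ (and of $b_1,\dots,b_k$) directly from minimality, which is a slightly cleaner and more self-contained way to pin down the type of $N$. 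One small caution, which applies equally to the paper's own wording: when you say ``$m\in\langle m\rangle_{\mathbb{O}}$ has length $(n,k)$'' and invoke Lemma~\ref{thm:maxtype} inside the submodule, you are silently comparing the length of $m$ computed in $M$ with the length computed in $\langle m\rangle_{\mathbb{O}}$; since $\cyclplus(\langle m\rangle_{\mathbb{O}})\subseteq\cyclplus(M)$, the latter is a priori only $\ge(n,k)$, which is exactly the inequality you need, but it is worth stating the inequality rather than the equality. This does not affect the validity of the argument.
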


\begin{proof}
Since $m$ is of type $(n,k)$, it has a decomposition
\[
m=\sum_{i=1}^n m_i^{+}\oplus\sum_{j=1}^k m_j^{-}
\]
where $m_i^{+}\in\cyclplus(M)$ and $m_j^{-}\in\cyclminus(M)$.
By \ref{thm:cyclid}, these can further be written as $m_i^{+}=p_ie_i$ and $m_j^{-}=q_jf_j$ for $p_i,q_j\in\mathbb{O}$, $e_i\in\assoc(M)$ and $f_i\in\assoc(M')$.

Let us consider the set $M':=\sum_{i=1}^n \mathbb{O}e_i\oplus\sum_{i=1}^n \mathbb{O}f_i$.
This is a submodule, since all $e_i\in\assoc(M)$ and $f_i\in\conjassoc(M)$.
Furthermore, by \ref{thm:submod}, $\assoc(M')=\Span_{\mathbb{R}}(e_1,\dots,e_n)$ and $\conjassoc(M')=\Span_{\mathbb{R}}(f_1,\dots,f_k)$.
Therefore the type of $M'$ is $(n',k')$ where $n':=\dim_{\mathbb{R}}\assoc(M')\le n$ and $k':=\dim_{\mathbb{R}}\conjassoc(M')\le k$.
However, since $m\in M'$ is of length $(n,k)$, by \ref{thm:maxtype}, these two must be equalities, and $n=n'$, $k=k'$.
Since the submodule $\langle m\rangle\subseteq M'$ has length at least $(n,k)$, \ref{thm:submod} shows that $\assoc(\langle m\rangle)=\assoc M$ and $\conjassoc(\langle m\rangle)=\conjassoc M$, hence $M'$ is the generated submodule.
\end{proof}

\begin{corollary}
The set of elements $m$ of $M$ of finite type $(n,k)$ that generate $M$ are precisely those elements whose length is $(n,k)$.
\end{corollary}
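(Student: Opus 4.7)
The plan is to deduce this directly from the preceding theorem, which identifies the type of $\langle m\rangle$ with the length of $m$. The two directions correspond to the two implications.

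For the forward direction, suppose $m$ generates $M$, i.e.\ $\langle m\rangle_{\mathbb{O}}=M$. By the preceding theorem, the type of $\langle m\rangle_{\mathbb{O}}$ is $(\ell_m^{+},\ell_m^{-})$. Since $M$ has type $(n,k)$, and the type is an isomorphism invariant (by Theorem~\ref{thm:decomp}), we conclude $(\ell_m^{+},\ell_m^{-})=(n,k)$.

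For the reverse direction, suppose $m\in M$ has length $(n,k)$. Again by the preceding theorem, $\langle m\rangle_{\mathbb{O}}$ has type $(n,k)$, meaning $\dim_{\mathbb{R}}\assoc(\langle m\rangle_{\mathbb{O}})=n$ and $\dim_{\mathbb{R}}\conjassoc(\langle m\rangle_{\mathbb{O}})=k$. Since $\langle m\rangle_{\mathbb{O}}\subseteq M$, we have the inclusions $\assoc(\langle m\rangle_{\mathbb{O}})\subseteq\assoc(M)$ and $\conjassoc(\langle m\rangle_{\mathbb{O}})\subseteq\conjassoc(M)$ as real vector spaces. The finite dimensions on both sides already agree, so these inclusions are equalities. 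Applying the decomposition of Theorem~\ref{thm:decomp} to both $\langle m\rangle_{\mathbb{O}}$ and $M$ yields $\langle m\rangle_{\mathbb{O}}=\mathbb{O}\assoc(M)\oplus\mathbb{O}\conjassoc(M)=M$.

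There is essentially no obstacle here, as the result is a formal consequence of the preceding theorem combined with the characterization of submodules in Theorem~\ref{thm:submod} and the decomposition in Theorem~\ref{thm:decomp}. The only care needed is in the reverse direction, where one has to verify that matching real dimensions of the associative and conjugate associative parts already suffices to force equality of the submodules; this follows because these parts determine the whole module via the decomposition theorem.
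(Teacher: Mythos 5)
Your proof is correct and follows the route the paper leaves implicit (the corollary is stated without proof as an immediate consequence of the preceding theorem). Both directions are the straightforward ones: the forward direction uses that type is an isomorphism invariant, and the reverse direction uses that matching finite dimensions of the associative and conjugate associative parts, together with the decomposition of Theorem~\ref{thm:decomp}, force the submodule to be all of $M$.
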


We can go further and characterize the length of elements by looking at what is essentially their octonionic coordinates when expressed as elements of $\mathbb{O}^n\oplus\overline{\mathbb{O}}^k$.

\begin{lemma}
\label{thm:typeisrank}
Given a left $\mathbb{O}$-module $M$, consider a basis of $M$, $\{u_1,\dots\}\subset\assoc(M)$, $\{v_1,\dots\}\subset\conjassoc(M)$.
Let $m\in M$ of length $(n',k')$ have a decomposition $\sum_i p_iu_i+\sum_j q_jv_j$.
Then $n'$ is the rank of the set of vectors $\{p_i\}$ in $\mathbb{O}$ as a real subspace, and $k'$ is that of $\{q_i\}$.
\end{lemma}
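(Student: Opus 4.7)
My plan is to handle the $+$ and $-$ components of $m$ separately, then convert the question of minimal length into a real-rank question via the parametrization of cyclic elements in Theorem \ref{thm:cyclid}. Since $\cyclplus(M)\subseteq M^{+}$ and $\cyclminus(M)\subseteq M^{-}$ (by \ref{thm:cyclid}) and $M=M^{+}\oplus M^{-}$ (by \ref{thm:decomp}), any length-realizing decomposition of $m$ into cyclic elements projects onto a decomposition of $m^{+}=\sum_i p_iu_i$ into $\cyclplus$-elements and of $m^{-}=\sum_j q_jv_j$ into $\cyclminus$-elements, independently. So it suffices to prove $n'=\dim_{\mathbb{R}}\Span_{\mathbb{R}}\{p_i\}$, the conjugate case being symmetric.

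For the upper bound, let $r:=\dim_{\mathbb{R}}\Span_{\mathbb{R}}\{p_i\}$ and pick $w_1,\dots,w_r\in\mathbb{O}$ spanning $\Span_{\mathbb{R}}\{p_i\}$, so $p_i=\sum_{k=1}^r c_{ki}w_k$ for real $c_{ki}$. Using only $\mathbb{R}$-bilinearity of the action (which is safe even without associativity), I will rearrange
\[
m^{+}=\sum_i p_iu_i=\sum_k w_k\Bigl(\sum_i c_{ki}u_i\Bigr).
\]
Each inner sum lies in $\assoc(M)$, so Theorem \ref{thm:cyclid} identifies the $r$ summands as elements of $\cyclplus(M)$, giving $n'\le r$.

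For the lower bound, I will take any length-realizing decomposition $m^{+}=\sum_{j=1}^{n'} s_jt_j$ with $s_j\in\mathbb{O}$ and $t_j\in\assoc(M)$, available by \ref{thm:cyclid}. Expanding $t_j=\sum_i\alpha_{ij}u_i$ in the chosen real basis of $\assoc(M)$ and again using only $\mathbb{R}$-bilinearity, I obtain
\[
m^{+}=\sum_i\Bigl(\sum_j\alpha_{ij}s_j\Bigr)u_i.
\]
Now the uniqueness of $\mathbb{O}$-coordinates in \ref{thm:decomp} (the remark after the theorem, guaranteeing uniqueness for \emph{any} real basis of $\assoc(M)\oplus\conjassoc(M)$) forces $p_i=\sum_j\alpha_{ij}s_j$ for every $i$, whence $\{p_i\}\subset\Span_{\mathbb{R}}\{s_1,\dots,s_{n'}\}$ and $r\le n'$.

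The step I expect to be the main obstacle is precisely this lower bound: non-associativity prevents any naive absorption of octonionic scalars into one another, and the whole argument rests on the fact that the rearrangements we need only involve \emph{real} scalars commuting with the $\mathbb{O}$-action. Pinning the final comparison on the uniqueness statement of Theorem \ref{thm:decomp}, rather than attempting a direct manipulation in $\mathbb{O}$, is what makes the proof go through.
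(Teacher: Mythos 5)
Your proof is correct, but it takes a genuinely different and more elementary route than the paper's. The paper works explicitly inside the Clifford algebra $\Cl_7(\mathbb{R})\cong M(8,\mathbb{R})\oplus M(8,\mathbb{R})$: it identifies $m$ with an $(n+k)$-tuple of vectors in $\mathbb{R}^8$, constructs matrices $A_s\oplus 0$ and $0\oplus B_s$ that kill all but one coordinate and normalize it, and thereby proves that the submodule $\langle m\rangle_{\mathbb{O}}$ has type equal to the rank pair; the statement about length then follows via the preceding theorem identifying the length of $m$ with the type of $\langle m\rangle_{\mathbb{O}}$. You instead stay entirely at the level of the module axioms: the upper bound $n'\le r$ is a regrouping of $\sum_i p_iu_i$ that uses nothing beyond $\mathbb{R}$-bilinearity of the action (which commutes past real scalars even in the absence of associativity) together with the parametrization $\cyclplus(M)=\bigcup_p p\,\assoc(M)$ from Theorem \ref{thm:cyclid}; the lower bound $r\le n'$ is the same rearrangement run in reverse on a length-realizing decomposition, pinned down by the uniqueness of $\mathbb{O}$-coordinates with respect to an arbitrary real basis of $\assoc(M)$ (the remark after Theorem \ref{thm:decomp}). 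Your version avoids the matrix-algebra machinery entirely and computes the length directly rather than going through the type of the generated submodule, which makes it both shorter and more transparent about exactly where non-associativity is harmless — only real scalars are ever moved across the action. The paper's approach has the merit of displaying the concrete $\Cl_7$-structure of the cyclic submodule, which is useful intuition, but is heavier than this particular statement requires.
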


\begin{proof}
Because of \ref{thm:findim}, we may assume without loss of generality that $M$ is of finite type $(n,k)$.
Hence $M$ is isomorphic to $\mathbb{O}^n\oplus\overline{\mathbb{O}}^k$.
Let us recall \ref{thm:assoc} to consider the structure of $M$ as a left $\Cl_7(\mathbb{R})$-module where $\Cl_7\cong M(8,\mathbb{R})\oplus M(8,\mathbb{R})$.
Let us denote the left matrix algebra $M(8,\mathbb{R})$ by $M_L$, the right matrix algebra by $M_R$.
The right algebra $M_R$ acts trivially on $\mathbb{O}$, while $M_L$ acts trivially on $\overline{\mathbb{O}}$.
Since the only simple module over $M(8,\mathbb{R})$ is $\mathbb{R}^8$, we may identify the module $\mathbb{O}$ with $\mathbb{R}^8$ via the canonical real basis $1,e_1,\dots,e_7$ of $\mathbb{O}$.
Similarly for $\overline{\mathbb{O}}$.
Thus an element $m\in M$ may be identified as an $n+k$-tuple of real vectors in $\mathbb{R}^8$, identified by the real vectors $p_i$ and $q_i$ as expressed in the statement.

Now assume that the rank of the set of real vectors $p_i$ is $n'$ and that of $q_i$ is $k'$, and we will show that the submodule generated by $m$ is of type $(n',k')$.
By multiplying $m$ by $I\oplus 0\in M_L\oplus M_R$, we get the $n+k$-tuple consisting of $p_i$s and all $0$s.
Let us take a linearly independent subset $p_{i_1},\dots,p_{i_{n'}}$ of $\{p_1,\dots,p_{n'}\}$.
For each $s$, there is an element $A_s$ of $M_L$ that sends the vectors $p_{i_t}$ to $0$ for all $t\ne s$ except for $p_{i_s}$ which it sends to $1\in\mathbb{O}\cong\mathbb{R}^8$.
Furthermore $p_{i_s}\in\mathbb{O}$ itself corresponds to an element $L_{p_{i_s}}\in\Cl_7(\mathbb{R})$.
Then for any $p_i$, we have the identity $p_i=\sum_{s=1}^{n'} L_{p_{i_s}}\cdot (A_s\oplus 0)\cdot p_i$, as $\sum_{s=1}^{n'} L_{p_{i_s}}(A_s\oplus 0)$ is the identity map over $\Span_{\mathbb{R}}\{p_1,\dots,p_{n'}\}\oplus 0$.
Similarly we can choose a linearly independent set $q_{i_1},\dots,q_{i_{k'}}$ and matrices $B_s$ to get $q_i=\sum_{s=1}^{k'} L_{q_{i_s}}\cdot (0\oplus B_s)\cdot q_i$.

Put together we get $m=\sum_{s=1}^{n'} L_{p_{i_s}}\cdot((A_s\oplus 0)m)+\sum_{s=1}^{k'} L_{q_{i_s}}\cdot((0\oplus B_s)m)$, therefore $m$ is in the submodule generated by the elements $(A_s\oplus 0)m$ and $(0\oplus B_s)m$.
These in turn are in the submodule generated by $m$.
They are also linearly independent, since $(A_s\oplus 0)m$ has a zero coordinate for all $i_t$ where $t\ne s$ but non-zero in the coordinate $i_s$, and similarly for $(0\oplus B_s)m$.
Since the image of $A_s\oplus 0$ is in $\assoc(M)$ and that of $0\oplus B_s$ is in $\conjassoc(M)$, the module generated by $m$ is in fact of type $(n',k')$.
\end{proof}

\begin{corollary}
The set of cyclic left $\mathbb{O}$-modules are those that have types $(n,k)$ where $0\le n,k\le 8$.
\end{corollary}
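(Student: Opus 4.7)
The plan is to read the corollary as a direct two-sided application of Lemma \ref{thm:typeisrank} (the length-equals-rank statement) combined with the preceding theorem that an element of length $(n,k)$ generates a submodule of type $(n,k)$. The underlying observation is that the octonions form an $8$-dimensional real vector space, so any collection of octonionic ``coordinates'' can span a subspace of real dimension at most $8$.

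For necessity, suppose $M$ is cyclic and of type $(n,k)$, generated by some $m\in M$. Pick bases $\{u_i\}\subset\assoc(M)$ and $\{v_j\}\subset\conjassoc(M)$ and write
\[
m=\sum_i p_i u_i+\sum_j q_j v_j.
\]
The preceding theorem forces the length of $m$ to equal $(n,k)$, since the generated submodule is the whole $M$. By Lemma \ref{thm:typeisrank}, $n$ is the real rank of $\{p_i\}\subseteq\mathbb{O}$ and $k$ is the real rank of $\{q_j\}\subseteq\mathbb{O}$, and because $\dim_{\mathbb{R}}\mathbb{O}=8$ both are at most $8$.

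For sufficiency, given $(n,k)$ with $0\le n,k\le 8$, I would take the concrete model $M=\mathbb{O}^{n}\oplus\overline{\mathbb{O}}^{k}$ of this type, with the canonical basis $u_1,\dots,u_n$ of $\assoc(M)$ and $v_1,\dots,v_k$ of $\conjassoc(M)$. Choose $p_1,\dots,p_n\in\mathbb{O}$ that are $\mathbb{R}$-linearly independent (e.g.\ $1,e_1,\dots,e_{n-1}$) and similarly $q_1,\dots,q_k$, and set
\[
m:=\sum_{i=1}^n p_i u_i+\sum_{j=1}^k q_j v_j.
\]
By Lemma \ref{thm:typeisrank}, $m$ has length exactly $(n,k)$, so the previous theorem gives $\langle m\rangle_{\mathbb{O}}$ of type $(n,k)$; since the ambient $M$ is also of type $(n,k)$ and types determine modules up to isomorphism (Theorem \ref{thm:decomp}), the inclusion $\langle m\rangle_{\mathbb{O}}\subseteq M$ must be an equality, so $M$ is cyclic.

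There is no real obstacle here: the only substantive input is the bound $\dim_{\mathbb{R}}\mathbb{O}=8$, and everything else is packaging Lemma \ref{thm:typeisrank} and the preceding theorem. The only point that deserves a sentence of care is making sure that in the sufficiency direction one does actually produce an element of both the right length \emph{and} inside a module of exactly the desired type, which is handled by working in the standard model $\mathbb{O}^n\oplus\overline{\mathbb{O}}^k$.
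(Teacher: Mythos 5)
Your proof is correct and takes essentially the same route as the paper: necessity from the $8$-dimensionality of $\mathbb{O}$ via Lemma \ref{thm:typeisrank}, sufficiency by exhibiting an element in $\mathbb{O}^n\oplus\overline{\mathbb{O}}^k$ whose octonionic coordinates are $\mathbb{R}$-linearly independent (the paper uses exactly $1,e_1,\dots,e_{n-1}$). The only difference is that you spell out the final step that a submodule of the same finite type as the ambient module must equal it, which the paper leaves implicit.
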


\begin{proof}
Using \ref{thm:typeisrank}, we see that the rank of a set of octonions considered as real vectors is at most $8$, therefore the upper bound holds.
Take an arbitrary pair of integers $(n,k)$ satisfying the stated inequalities.
Let the real basis of $\mathbb{O}$ be denoted by $1,e_1,\dots,e_7$.
Within $\mathbb{O}^n\oplus\overline{\mathbb{O}}^k$, choose the element
\[
m=[1,e_1,\dots,e_{n-1}]\oplus[1,e_1,\dots,e_{k-1}]
\]
(writing $[1]$ in the case of $n=1$ or $k=1$).
By \ref{thm:typeisrank}, the rank of the vectors $\{1,e_1,\dots,e_{i-1}\}$ is $i$, and so $m$ is of length $(n,k)$.
\end{proof}

\section{Bimodules}

Right $\mathbb{O}$-modules can be defined similarly to left $\mathbb{O}$-modules.
An $\mathbb{O}$-bimodule should be a left and right $\mathbb{O}$-module with some type of compatibility between the two structures.
In \cite{schafer1952}, a \textbf{representation} for an alternative algebra is introduced that is equivalent to the following definition.

\begin{definition}
An $\mathbb{O}$-bimodule is a left and right module over $\mathbb{O}$ where
\[
[p,q,m]=[q,m,p]=[m,p,q]\quad\forall p,q\in\mathbb{O},m\in M.
\]
We shall denote by $\assoc_L(M)$ (and respectively, $\conjassocL(M)$) the associative part (respectively, conjugate associative part) of $M$ as a left module, and $\assoc_R(M)$ (respectively, $\conjassocR(M)$) will represent the same parts of $M$ as a right module.
\end{definition}

The left $\mathbb{O}$-module $\mathbb{O}$ can be endowed with a natural bimodule structure, however, as it will be clear later on, $\overline{\mathbb{O}}$ cannot be a bimodule.
As it turns out, the bimodule condition is strong enough that a bimodule cannot have a conjugate associatve part.
Furthermore, if it does not have a conjugate associative part, it can be endowed with a unique, natural bimodule structure as the direct sum of bimodules isomorphic to $\mathbb{O}$.
We will first check the left and right associative parts of such a module.

\begin{lemma}
\label{thm:equassoc}
Assume $M$ is a bimodule.
Then $\assoc_L(M)=\assoc_R(M)$.
Furthermore, if $m$ is in the associative part, $(pm)q=p(mq)$ for any $p,q\in\mathbb{O}$.
\end{lemma}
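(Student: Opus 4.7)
The plan is to unpack the three associators appearing in the bimodule axiom and observe that, by the cyclic symmetry $[p,q,m]=[q,m,p]=[m,p,q]$, the vanishing of any one of them (as $p,q$ range over $\mathbb{O}$) is equivalent to the vanishing of the others. Both statements of the lemma then fall out essentially by definition, with no real calculation required.

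First I would spell out the three associators in the natural way: the left associator $[p,q,m]=(pq)m-p(qm)$, the mixed associator $[q,m,p]=(qm)p-q(mp)$, and the right associator $[m,p,q]=(mp)q-m(pq)$. By the definitions in the paper, $m\in\assoc_L(M)$ iff the left associator vanishes for all $p,q\in\mathbb{O}$, and $m\in\assoc_R(M)$ iff the right associator vanishes for all $p,q\in\mathbb{O}$. The bimodule condition identifies these two associators pointwise, so the two vanishing conditions are equivalent. This immediately gives $\assoc_L(M)=\assoc_R(M)$.

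For the second claim, fix $m$ in this common associative part. From $[p,q,m]=0$ and the bimodule axiom we also get $[q,m,p]=(qm)p-q(mp)=0$, so $(qm)p=q(mp)$ for all $p,q\in\mathbb{O}$. Relabeling $p\leftrightarrow q$ yields $(pm)q=p(mq)$, which is precisely the stated identity.

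The only minor point requiring care is the interpretation of the associator bracket in each of its three arrangements of $\mathbb{O}$- and $M$-valued slots; once these are read off consistently, the proof is immediate from the cyclic-symmetry content of the bimodule axiom, so I do not anticipate any genuine obstacle.
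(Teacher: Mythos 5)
Your argument is correct and is essentially identical to the paper's proof: both note that the bimodule axiom identifies $[p,q,m]$, $[q,m,p]$, and $[m,p,q]$, so the vanishing of the left associator is equivalent to that of the right associator, and the middle associator $[q,m,p]=0$ gives the mixed identity directly.
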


\begin{proof}
We have $m\in\assoc(M)$ if and only if $[a,b,m]=0$ and $m\in\assoc_R(M)$ if and only if $[m,a,b]=0$ for all $a,b\in\mathbb{O}$.
These two are equivalent.
For such an $m$, $[p,m,q]=0$ as well, which means that $(pm)q=p(mq)$.
%
\end{proof}

Henceforth $\assoc(M)$ will denote the associative part of a bimodule independently of whether we consider it as a left or right module.

We want to show that if $m\in\assoc(M)$, we have $pm=mp$.
For this, we need the following lemma.

\begin{lemma}
Let $m\in\assoc(M)$ and $u$, $v$, $w\in\mathbb{O}$.
Then $[u,v,mw]=[mw,u,v]=m[w,u,v]=m[u,v,w]$.
Furthermore, $u(vmw)=(uv)mw-m[u,v,w]$.
\end{lemma}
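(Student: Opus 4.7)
The plan is to exploit the two structural facts already proved for an associative element $m$: first, the bimodule axiom gives the cyclic identity $[p,q,m']=[m',p,q]$ for \emph{every} element $m'$ of $M$ (in particular for $m'=mw$); second, Lemma \ref{thm:equassoc} tells us that $m$ associates freely with scalars on both sides, in the sense that $(pm)q=p(mq)$ for all $p,q\in\mathbb{O}$, and in particular that $(mw)u=m(wu)$ since $[m,w,u]=0$.

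First I would dispose of the easy equalities. The equality $[u,v,mw]=[mw,u,v]$ is nothing but the bimodule cyclic rule applied to the element $mw\in M$. The equality $m[w,u,v]=m[u,v,w]$ is simply left-multiplication by $m$ of the associator identity $[w,u,v]=[u,v,w]$ inside $\mathbb{O}$, which holds by alternativity.

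The heart of the matter is the middle equality $[mw,u,v]=m[w,u,v]$. Unfolding the definition,
\[
[mw,u,v]=((mw)u)v-(mw)(uv).
\]
Now I apply the associative-element properties three times. Since $[m,w,u]=0$ we have $(mw)u=m(wu)$, so $((mw)u)v=(m(wu))v$. Applying Lemma \ref{thm:equassoc} with the pair $(wu,v)$ yields $(m(wu))v=m((wu)v)$, and applying it again with the pair $(w,uv)$ yields $(mw)(uv)=m(w(uv))$. Substituting,
\[
[mw,u,v]=m((wu)v)-m(w(uv))=m\bigl((wu)v-w(uv)\bigr)=m[w,u,v],
\]
where the second equality uses $\mathbb{R}$-linearity of left multiplication by $m$.

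For the final assertion, I would simply rewrite $[u,v,mw]=(uv)(mw)-u(v(mw))$ and combine it with the chain of equalities just proved, which gives $(uv)(mw)-u(v(mw))=m[u,v,w]$, so that $u(v(mw))=(uv)(mw)-m[u,v,w]$. The only ambiguity in reading $u(vmw)$ and $(uv)mw$ is harmless: by Lemma \ref{thm:equassoc} the expressions $(vm)w$ and $v(mw)$ coincide, and so do $((uv)m)w$ and $(uv)(mw)$, so the statement is unambiguous. The main obstacle, such as it is, is just bookkeeping the three successive applications of the associative-element identities in the central computation; everything else is formal.
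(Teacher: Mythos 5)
Your proof is correct and follows essentially the same route as the paper: identify the first and last equalities as the bimodule axiom and the cyclic associator identity in $\mathbb{O}$, then establish the middle equality $[mw,u,v]=m[w,u,v]$ by three applications of $(mx)y=m(xy)$ (valid since Lemma \ref{thm:equassoc} gives $m\in\assoc_R(M)$), and finally obtain the last formula by expanding $[u,v,mw]$. The only cosmetic difference is that you spell out the three reassociations explicitly and add a remark on why $u(vmw)$ and $(uv)mw$ are unambiguous, which the paper leaves implicit.
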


\begin{proof}
The first equality is a consequence of the definition of a bimodule and the last equality is a property of $\mathbb{O}$, so we only need to check $[mw,u,v]=m[w,u,v]$.
By definition the left hand side is equal to $((mw)u)v-(mw)(uv)$.
Since $m\in\assoc(M)$, we have $(mx)y=m(xy)$ for arbitrary $x$, $y\in\mathbb{O}$.
Hence the associator can be rewritten as $m((wu)v)-m(w(uv))$, which proves the first statement.
The second statement can be proven by expanding $[u,v,mw]$.
\end{proof}

\begin{lemma}
If $m$ is in the associative part of the bimodule $M$, $pm=mp$ for any $p,q\in\mathbb{O}$.
\end{lemma}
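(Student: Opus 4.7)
The plan is to apply the bimodule cyclic identity $[a,b,pm]=[b,pm,a]$ to the element $pm\in M$ and to compute both sides using the left, middle, and right associativity that $m\in\assoc(M)$ enjoys: \ref{thm:equassoc} combined with the bimodule axiom gives $[p,q,m]=[p,m,q]=[m,p,q]=0$ whenever $m\in\assoc(M)$. The computation will show that every octonionic associator $[a,b,p]$ commutes with $m$ in the bimodule, and since such associators span all of $\operatorname{Im}(\mathbb{O})$, the commutativity $pm=mp$ will follow for every $p\in\mathbb{O}$.

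First I compute $[a,b,pm]$. Three applications of left associativity of $m$ give $b(pm)=(bp)m$, $(ab)(pm)=((ab)p)m$, and $a((bp)m)=(a(bp))m$, which telescope to
\[
[a,b,pm] = ((ab)p - a(bp))\cdot m = [a,b,p]\cdot m,
\]
the left action of the octonion $[a,b,p]$ on $m$. Next I compute $[b,pm,a]$. Middle associativity of $m$ rewrites $(b(pm))a=((bp)m)a=(bp)(ma)$ and $(pm)a=p(ma)$, so $b((pm)a)=b(p(ma))$. At this point the preceding lemma applies directly to $ma$ (with $u=b$, $v=p$, $w=a$), giving $(bp)(ma)-b(p(ma))=m[b,p,a]$, hence
\[
[b,pm,a] = m\cdot [b,p,a],
\]
the right action of $m$ on the octonion $[b,p,a]$. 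Equating these two via the bimodule axiom and invoking the cyclic symmetry $[b,p,a]=[a,b,p]$ of the octonionic associator produces the key identity
\[
[a,b,p]\cdot m = m\cdot [a,b,p] \qquad \text{for all } a,b,p\in\mathbb{O}.
\]

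To finish I note that a direct calculation with the paper's multiplication rule gives $[e_i,e_{i+1},e_{i+2}]=-2e_{i+5}$ for every $i\in\mathbb{Z}/7\mathbb{Z}$, exhibiting each basis vector $e_1,\dots,e_7$ as a scalar multiple of an associator; the real span of octonionic associators is therefore all of $\operatorname{Im}(\mathbb{O})$, and combined with the trivial fact $rm=mr$ for $r\in\mathbb{R}$ this forces $pm=mp$ for all $p\in\mathbb{O}$. The main obstacle will be the computation of $[b,pm,a]$: the nonobvious move is to route $b((pm)a)$ through $b(p(ma))$ via middle associativity, precisely so that the preceding lemma, in its form $u(v\cdot mw) = (uv)\cdot mw - m[u,v,w]$, applies to the module element $ma$. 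Once that is recognized, the asymmetry between the left action of $[a,b,p]$ on $m$ and the right action of $m$ on $[b,p,a]$ becomes transparent, and the bimodule cyclic identity forces both to coincide.
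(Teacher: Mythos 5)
Your proof is correct, and it takes a genuinely different route from the paper's. The paper's argument picks a purely imaginary $p$ together with $q,r$ chosen so that $[p,q,r]\neq 0$, expands the left-module identity $[p,p,qmr]=0$ via the preceding lemma, and cancels the nonzero octonion $[p,q,r]$ at the end to extract $pm=mp$. You instead apply the bimodule cyclic identity directly to $pm$, using left and middle associativity of $m$ (from Lemma \ref{thm:equassoc}) and the preceding lemma to obtain the clean commutation relation $[a,b,p]\,m=m\,[a,b,p]$ for all $a,b,p\in\mathbb{O}$, and then invoke the fact that octonionic associators span $\operatorname{Im}(\mathbb{O})$. Your derivation of $[a,b,pm]=[a,b,p]m$ and $[b,pm,a]=m[b,p,a]$ is correct, and the cyclic symmetry $[b,p,a]=[a,b,p]$ closes the loop. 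What your approach buys is a structural statement (associators centralize $\assoc(M)$) in place of the paper's cancellation trick, at the cost of needing the spanning lemma for associators; the paper's approach is slightly more self-contained but less transparent about \emph{why} commutativity happens. One minor remark: the paper's index rule for $e_ie_j$ as literally stated is ambiguous (it can be read to give contradictory products for certain pairs), so the exact constant in $[e_i,e_{i+1},e_{i+2}]=-2e_{i+5}$ depends on how one resolves that; but the essential fact you need — that associators of basis units span $\operatorname{Im}(\mathbb{O})$ — is standard and holds under any correct normalization, so this does not affect the validity of your argument.
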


\begin{proof}
We may assume that $p$ is purely imaginary by checking separately for the real and imaginary parts.
Let us choose $q$ and $r\in\mathbb{O}$ such that $[p,q,r]\ne 0$ by making sure that $p$, $q$ and $r$ don't appear in a common associative subfield of $\mathbb{O}$.

Since $M$ is a left module, we have $[p,p,qmr]=0$, which we will use to derive the statement.
We have $[p,p,qmr]=(p^2)(qmr)-p(p(qmr))$ by definition.
Using the previous lemma repeatedly, we can rewrite this as
$(p^2q)mr-m[p^2,q,r]-(p(pq))mr+m[p,pq,r]+pm[p,q,r]$.

First notice that the term $(p(pq))mr=(p^2q)mr$, so it cancels out with the first term.
Also since $p$ is pure imaginary, its square is real, and so $[p^2,q,r]=0$.
Hence we have $m[p,pq,r]+pm[p,q,r]=0$.

We claim that since $p\in\mathbb{O}$ is pure imaginary, we have $[p,pq,r]=-p[p,q,r]$.
The left hand side is $(p^2q)r-p((pq)r)$, while the right hand side is $-p((pq)r)+p^2(qr)$.
Rearranging the equation, we need to show $(p^2q)r=p^2(qr)$.
Since $p$ is pure imaginary, $p^2$ is real, therefore the claim is verified.

Returning to the main proof, since $pm[p,q,r]=-m[p,pq,r]$ holds, by the previous claim we also have $pm[p,q,r]=-m[p,pq,r]=mp[p,q,r]$.
Since $q$ and $r$ were chosen so that $[p,q,r]\ne 0$, we can cancel out by $[p,q,r]$ on both sides of the equation, leaving us with $pm=mp$.
\end{proof}

Now we shall turn our attention to the conjugate associative part of a bimodule.

\begin{lemma}
\label{thm:conjassocR}
Assume that $M$ is a bimodule.
Let $m\in\conjassocR(M)$.
Then $m\in\assoc_L(M)+\conjassocL(M)$.
\end{lemma}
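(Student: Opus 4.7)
The plan is first to combine the bimodule cyclic identity with the conjassocR condition into a single compact formula. From $[p,q,m]=[m,p,q]$, namely $(pq)m-p(qm)=(mp)q-m(pq)$, substituting $m(pq)=(mq)p$ rewrites the right side as $(mp)q-(mq)p = R_qR_pm-R_pR_qm$; invoking conjassocR once more to turn $R_pR_qm$ into $R_{pq}m$ and $R_qR_pm$ into $R_{qp}m$ collapses this to $R_{[q,p]}m=m[q,p]$. The outcome is the key identity
\[
[p,q,m] = m[q,p] \qquad \text{for all } p,q\in\mathbb{O}.
\]

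Next I would apply \ref{thm:decomp} to decompose $m=m^{+}+m^{-}$ with $m^{+}\in M^{+}=\mathbb{O}\assoc_L(M)$ and $m^{-}\in M^{-}=\mathbb{O}\conjassocL(M)$, fix bases $\{u_i\}$ of $\assoc_L(M)$ and $\{v_j\}$ of $\conjassocL(M)$, and write $m^{+}=\sum_i p_iu_i$ and $m^{-}=\sum_j q_jv_j$. The target is each $p_i, q_j \in \mathbb{R}$, which would force $m^{+}\in\assoc_L(M)$ and $m^{-}\in\conjassocL(M)$. For the $m^{+}$ piece, using $u_i\in\assoc_L(M)$ together with \ref{thm:equassoc} (giving $pu_i=u_ip$) and the general identity $(ru)s=(rs)u$ for $u\in\assoc_L$, both sides of the key identity evaluated on $p_iu_i$ collapse to coefficients of $u_i$ in $\mathbb{O}$: namely $[p,q,p_i]u_i$ on the left and $(p_i[q,p])u_i$ on the right. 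Since right multiplication preserves $M^{+}$, projecting the key identity onto $M^{+}$ and using linear independence of the $u_i$ reduces (up to a cross-term explained below) to the octonionic equations $[p,q,p_i]=p_i[q,p]$. The nucleus of $\mathbb{O}$ being exactly $\mathbb{R}$ then forces each $p_i\in\mathbb{R}$. A symmetric analysis using the conjugate associative identity $(pq)v_j=q(pv_j)$ produces the dual equations for each $q_j$ and yields $q_j\in\mathbb{R}$.

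The main obstacle is the cross-term. While $(ru)s=(rs)u$ for $u\in\assoc_L$ shows that right multiplication by any $s\in\mathbb{O}$ sends $M^{+}$ back into $M^{+}$, this is not a priori true for $M^{-}$, so the $M^{+}$-projection of $m^{-}[q,p]$ may contribute when one isolates the equation governing $m^{+}$. Controlling this contribution is the technical heart of the proof. I expect it can be handled either by iterating the key identity and feeding in preliminary conclusions about $m^{+}$, or by computing directly inside the $\Cl_7(\mathbb{R})$-module decomposition of \ref{thm:assoc}, where the auxiliary bimodule identity $[p,q,m]=(R_pL_q-L_qR_p)m$ (immediate from $[p,q,m]=[q,m,p]$) pins down how the left and right actions interact on each irreducible summand. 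Once the cross-term is dispatched, combining the two parts yields $m\in\assoc_L(M)+\conjassocL(M)$.
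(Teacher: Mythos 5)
Your opening move matches the paper's: both start from the bimodule identity $[p,q,m]=[m,p,q]$ and use the $\conjassocR$ condition to rewrite the right-hand side, arriving at what you call the key identity $[p,q,m]=m[q,p]$ (the paper specializes to imaginary units and gets $[e_i,e_j,m]=2me_k$ with $e_k=e_je_i$). You also, like the paper, decompose $m$ over the left-module basis of $\assoc_L(M)\cup\conjassocL(M)$ and aim to force the octonionic coefficients to be real. Up to that point the approaches are essentially parallel.

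The divergence — and the gap — is exactly the cross-term you flagged. Your version of the identity keeps a \emph{right} multiplication $m[q,p]$ on one side, and at this stage of the paper nothing is known about how right multiplication interacts with the $M^{+}\oplus M^{-}$ decomposition (the fact $(am)b=(a\overline{b})m$ for $m\in\conjassoc$ is Lemma \ref{thm:conjclosed}, which comes \emph{after} this lemma and depends on it). So there is no justification for projecting $m^{-}[q,p]$ away when you isolate the $u_i$-coefficients, and your equation $[p,q,p_i]=p_i[q,p]$ is not derived; it is only conjectured modulo an uncontrolled term. Neither of your proposed repairs (iterating the identity, or dropping into the $\Cl_7$ picture) is carried out, so as written the proof does not close. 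As a secondary issue, even if $[p,q,p_i]=p_i[q,p]$ did hold for all $p,q$, it would force $p_i=0$ rather than $p_i\in\mathbb{R}$, since for real nonzero $p_i$ the left side vanishes while the right side does not; this is a sign that the extracted scalar equation is not the right one.

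The paper sidesteps the cross-term entirely by a small but decisive rearrangement: it writes $[e_i,e_j,m]=-e_km+e_i((e_ke_i)m)$ and observes that, since the other side $2me_k$ and the term $-e_km$ are independent of $e_i$ (for fixed $e_k$), the expression $e_i((e_ke_i)m)$ must be independent of $e_i$. That expression involves \emph{only left multiplication} on $m$, so expanding $m=\sum_t p_tm_t$ and using associativity/conjugate-associativity of the $m_t$ gives a linear combination $\sum_t c_t(e_i,e_k,p_t)\,m_t$ whose coefficients are uniquely determined by \ref{thm:decomp}. Demanding that each $c_t$ be independent of $e_i$ (and then of $e_k$) forces $p_t$ to lie in every quaternionic subfield, hence in $\mathbb{R}$. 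To repair your proof, you would need an analogue of this isolation step; simply projecting the two sides of $[p,q,m]=m[q,p]$ onto $M^{+}$ and $M^{-}$ does not work without first knowing that right multiplication preserves that splitting.
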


\begin{proof}
Choose two distinct imaginary units from the basis of $\mathbb{O}$, $e_i$ and $e_j$.
We have $[e_i,e_j,m]=[m,e_i,e_j]$.
Since $m\in\conjassocR(M)$, $[m,e_i,e_j]=(me_i)e_j-m(e_ie_j)=m(e_je_i-e_ie_j)$.
Since $e_i$ and $e_j$ are pure and distinct, their product is also an imaginary unit $e_k=e_je_i$ (or $e_k=-e_je_i$ in the octonionic basis, but by replacing $e_k$ by $-e_k$, it reduces to this case).
This gives $2me_k$ for the right hand side, and we can substitute $e_j=-e_ke_i$.

On the other hand, $[e_i,e_j,m]=(e_ie_j)m-e_i(e_jm)=-e_km+e_i((e_ke_i)m)$.
Since the right hand side does not depend on $e_i$, the left hand side must not depend on it either, as long as $e_i$ is an imaginary unit distinct from $e_k$.
We will only consider the term that contains $e_i$, $e_i((e_ke_i)m)$.

We can write all elements of the left $\mathbb{O}$-module $M$ as $\mathbb{O}$-linear combinations of left associative and conjugate associative elements.
Let $m=\sum_t p_tm_t$ for $p_t\in\mathbb{O}$ and $m_t\in\assoc_L(M)\cup\conjassocL(M)$ $\mathbb{R}$-linearly independent elements.
Then $e_i((e_ke_i)m)=\sum_t e_i((e_ke_i)(p_tm_t))$.
We will show that all the coefficients $p_t$ are in fact real.

Using the fact that all $m_t$ are either associative or conjugate associative, we can rewrite the sum as
\[
\sum_t e_i((e_ke_i)(p_tm_t))=\sum_{m_t\in\assoc_L(M)} (e_i((e_ke_i)p_t))m_t+\sum_{m_t\in\conjassocL(M)} ((p_t(e_ke_i))e_i)m_t.
\]
By \ref{thm:decomp}, this is the unique way to write this element as a linear combination of $m_t$.
Therefore if it is independent from the choice of $e_i$, all coefficients must be so too, and it suffices to look at the terms separately.

Take a vector $m_t$ and first assume that $m_t\in\conjassocL(M)$.
Consider $(p_t(e_ke_i))e_i$.
It can be checked that for any three octonionic units $e_a$, $e_b$, $e_c$, we have $(e_ae_b)e_c=\pm e_a(e_be_c)$.
If the sign is positive, the three elements belong to a common associative subfield.
In fact, given two units $e_i$ and $e_k$, any element $p_t\in\mathbb{O}$ can be decomposed into two parts $p_t'+p_t''$ where $(p_t'e_k)e_i=p_t'(e_ke_i)$ and $(p_t''e_k)e_i=-p_t''(e_ke_i)$.
Then the term becomes $(p_t''-p_t')e_k$.
Unless $p_t$ belongs to the associative subfield generated by $e_i$ and $e_k$, this decomposition will depend on $e_i$.
Therefore we must have $p_t$ must be part of all such subfields for all values of $e_i$, but also for all values of $e_k$, since $p_t$ only depends on $m$, not $e_k$.
Since $e_k$ and $e_i$ were presumed arbitrary distinct imaginary units from the basis, this means that $p_t$ must be part of all quaternionic subfields of $\mathbb{O}$, which means that all $p_t\in\mathbb{R}$.

With a similar argument for $m_t\in\assoc_L(M)$, we get $\sum_t p_tm_t=m\in\assoc_L(M)+\conjassocL(M)$.
\end{proof}

\begin{lemma}
For a given bimodule $M$, $\conjassocL(M)=\conjassocR(M)$.
\end{lemma}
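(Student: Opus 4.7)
The plan is to combine Lemma \ref{thm:conjassocR} with the observation that the left- and right-module decompositions of $M$ from Theorem \ref{thm:decomp} agree as decompositions of the underlying vector space; the equality of $\conjassocL(M)$ and $\conjassocR(M)$ then drops out of a short intersection argument.

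First I would establish that these two decompositions coincide. By Lemma \ref{thm:equassoc} we have $\assoc_L(M) = \assoc_R(M) =: \assoc(M)$, and the preceding lemma gives $pa = ap$ for every $a \in \assoc(M)$ and $p \in \mathbb{O}$. Therefore $\mathbb{O}\assoc(M) = \assoc(M)\mathbb{O}$, so $M_L^{+} = M_R^{+}$; call this common submodule $M^{+}$. Taking complements in Theorem \ref{thm:decomp} applied to each side, we also get $M_L^{-} = M_R^{-} =: M^{-}$, so $M = M^{+} \oplus M^{-}$ is a decomposition compatible with both the left and the right structure.

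Next, take $m \in \conjassocR(M)$. Lemma \ref{thm:conjassocR} yields a decomposition $m = a + c$ with $a \in \assoc(M) \subseteq M^{+}$ and $c \in \conjassocL(M) \subseteq M^{-}$. But $m \in \conjassocR(M) \subseteq M_R^{-} = M^{-}$, so the $M^{+}$-component of $m$ vanishes; that is, $a = 0$ and $m = c \in \conjassocL(M)$. This shows $\conjassocR(M) \subseteq \conjassocL(M)$. Because the bimodule axiom $[p,q,m] = [q,m,p] = [m,p,q]$ is manifestly symmetric under interchanging the left and right actions, the same argument with the roles swapped produces the reverse inclusion $\conjassocL(M) \subseteq \conjassocR(M)$, completing the proof.

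The only real obstacle is conceptual rather than computational: one has to recognize that the commutativity $pa = ap$ on the associative part of the bimodule forces the left and right $M^{\pm}$-decompositions to coincide. Once this identification is isolated as its own step, Lemma \ref{thm:conjassocR} and its left–right mirror supply the rest automatically.
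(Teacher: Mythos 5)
Your proposal has a genuine gap at the step ``Taking complements in Theorem~\ref{thm:decomp} applied to each side, we also get $M_L^{-}=M_R^{-}$.'' Knowing $M_L^{+}=M_R^{+}=:M^{+}$ does \emph{not} imply the second summands agree: a direct-sum complement is not unique, so the two decompositions $M=M^{+}\oplus M_L^{-}$ and $M=M^{+}\oplus M_R^{-}$ could a priori have distinct second summands (think of $\mathbb{R}^2=\Span(e_1)\oplus\Span(e_2)=\Span(e_1)\oplus\Span(e_1+e_2)$). The subspace $M_L^{-}=\mathbb{O}\conjassocL(M)$ is determined by the left-module structure and $M_R^{-}=\conjassocR(M)\mathbb{O}$ by the right-module structure; their coincidence as real subspaces of $M$ is essentially what is at stake, and your argument assumes it rather than proving it. Without that identification, the conclusion ``the $M^{+}$-component of $m$ vanishes'' is unsupported: Lemma~\ref{thm:conjassocR} gives $m=a+c$ with $a\in M_L^{+}$ and $c\in M_L^{-}$, while $m\in\conjassocR(M)$ only places $m$ in $M_R^{-}$, and nothing yet rules out a nonzero $a$.

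This is exactly the point where the paper has to work. After using Lemma~\ref{thm:conjassocR} to write $m=\alpha+\beta$ with $\alpha\in\assoc_L(M)$ and $\beta\in\conjassocL(M)$, the paper picks a quaternionic triple $i,j,k\subset\mathbb{O}$ and runs a chain of associator computations ($[i,j,\beta]=[\beta,i,j]$, then $[k,v,\beta]=[v,\beta,k]$, then $[k,i,j\beta]=[i,j\beta,k]$) to force $\alpha=0$ directly. To repair your proof you would either need to supply that kind of concrete computation, or give an independent argument that $M^{+}$ has a \emph{canonical} two-sided complement that both $M_L^{-}$ and $M_R^{-}$ must equal; the latter is not established anywhere in the paper. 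A secondary, smaller point: the closing ``by symmetry'' step requires knowing that the mirror of Lemma~\ref{thm:conjassocR} (with $L$ and $R$ swapped) holds, which is plausible under the $p\ast m\leftrightarrow m\ast p$ involution but should be said explicitly, since $\mathbb{O}$ is noncommutative and the left/right swap passes through $\mathbb{O}^{\mathrm{op}}\cong\mathbb{O}$ via conjugation.
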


\begin{proof}
We will show that $\conjassocR(M)\subseteq\conjassocL(M)$.
Choose an element $m\in\conjassocR(M)$.
By \ref{thm:conjassocR} we know that $m\in\assoc_L(M)+\conjassoc_L(M)$.
Let us write the decomposition $m=\alpha+\beta$ where $\alpha\in\assoc_L(M)$ and $\beta\in\conjassoc_L(M)$.
By \ref{thm:equassoc}, we have $\alpha\in\assoc_R(M)$ as well.
In the following paragraphs, $i$, $j$, $k$ will denote the basis of a subfield of $\mathbb{O}$ isomorphic to the quaternions.

Since $m\in\conjassocR(M)$, we have $(mi)j=m(ji)=-mk$.
But also $(mi)j=((\alpha+\beta)i)j=\alpha(ij)+(\beta i)j$.
Therefore $(\beta i)j=-mk-\alpha k=-2\alpha k-\beta k$.

Consider $[i,j,\beta]=[\beta,i,j]$.
The left hand side is $(ij)\beta-i(j\beta)=k\beta-(ji)\beta=2k\beta$.
The right hand side is $(\beta i)j-\beta(ij)$ which by the previous paragraph is $-2\alpha k-2\beta k$.
Therefore $\beta k=-k(\alpha+\beta)$.

Now take an arbitrary pure imaginary unit $v$ and consider $[k,v,\beta]=[v,\beta,k]$.
The left hand side is $(kv)\beta-k(v\beta)=(kv-vk)\beta$ and the right hand side is $(v\beta)k-v(\beta k)$.
By the previous paragraph, this is $(v\beta)k+v(k\beta)+v(k\alpha)$.
Therefore $(v\beta)k=(kv-vk)\beta-(kv)\beta-(vk)\alpha=-(vk)(\alpha+\beta)$.

Finally, consider $[k,i,j\beta]=[i,j\beta,k]$.
The left hand side is $(ki)(j\beta)-k(i(j\beta))=(j(ki)-(ji)k)\beta=-2\beta$.
The right hand side is $(i(j\beta))k-i((j\beta)k)$.
Applying the result of the previous paragraph to $v=j$, we get $(j\beta)k=-i(\alpha+\beta)$.
Applying the result of the previous paragraph to $v=k$, we get $(i(j\beta))k=-k\beta k=-(\alpha+\beta)$.
Thus the right hand side simplifies to $-2\alpha-2\beta$.
Therefore $\alpha=0$, and $m\in\conjassocL(M)$.
\end{proof}

From now on we will denote $\conjassocL(M)=\conjassocR(M)$ by $\conjassoc(M)$ for bimodules $M$.

\begin{lemma}
\label{thm:conjclosed}
Let $m\in\conjassoc(M)$.
Then $(am)b=(a\overline{b})m$.
\end{lemma}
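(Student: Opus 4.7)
The plan is to establish first the intermediate identity $mb = \overline{b}\,m$ for every $b\in\mathbb{O}$ and $m\in\conjassoc(M)$, and then use this, together with left conjugate associativity, to derive the target formula $(am)b=(a\overline{b})m$.

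Since $mr=rm$ holds trivially for a real scalar $r$ and $\overline{b} = 2\,\mathrm{Re}(b) - b$, the intermediate identity is equivalent to the anticommutation rule $mc + cm = 0$ for every purely imaginary $c \in \mathbb{O}$. To produce such a rule, I would evaluate $(am)b$ in two different ways using the bimodule cyclic relations $[a,m,b]=[m,b,a]=[b,a,m]$. The equality $[a,m,b]=[m,b,a]$, combined with right conjugate associativity $(mb)a=m(ab)$, gives
\[
(am)b \;=\; a(mb) + m[a,b].
\]
The equality $[a,m,b]=[b,a,m]$, combined with left conjugate associativity $(ab)m=b(am)$, gives
\[
(am)b \;=\; a(mb) + (ba)m - (ab)m \;=\; a(mb) - [a,b]\,m.
\]
Comparing the two expressions cancels the $a(mb)$ terms and leaves $m[a,b] + [a,b]m = 0$. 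Since $[e_i,e_j] = \pm 2\,e_k$ as $i,j$ range over distinct basis indices, commutators span the purely imaginary subspace of $\mathbb{O}$, so this yields $mc + cm = 0$ for every purely imaginary $c$, establishing the intermediate claim.

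To finish, I would substitute $mb=\overline{b}\,m$ into the first expression, rewrite $a(\overline{b}\,m) = (\overline{b}\,a)m$ via left conjugate associativity, and expand $m[a,b] = -[a,b]m = (ba - ab)m$. The resulting expression telescopes:
\[
(am)b \;=\; (\overline{b}\,a + ba - ab)m \;=\; (2\,\mathrm{Re}(b)\,a - ab)m \;=\; (a(2\,\mathrm{Re}(b) - b))m \;=\; (a\overline{b})m.
\]

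The main obstacle is discovering the right pairing of bimodule cyclic permutations so that $a(mb)$ appears on both sides and cancels, isolating a pure commutator identity on $m$. Once that cancellation is found, the span-by-commutators fact for $\mathbb{O}$ and the final arithmetic step using $b + \overline{b} = 2\,\mathrm{Re}(b)$ are routine.
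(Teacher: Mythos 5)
Your proof is correct and follows essentially the same strategy as the paper: both first establish the intermediate identity $mb=\overline{b}m$ by extracting the anticommutation $c\,m+mc=0$ for purely imaginary $c$ from the bimodule cyclic equalities together with left and right conjugate associativity (and the fact that commutators span the imaginary octonions), and both then substitute this back into a cyclic associator identity and use left conjugate associativity to pull $\overline{b}$ across. The only differences are cosmetic — you derive the commutator identity by comparing $[a,m,b]$ against both $[m,b,a]$ and $[b,a,m]$ and cancelling $a(mb)$, whereas the paper compares $[u,v,m]$ directly with $[m,u,v]$; and you handle arbitrary $b$ at once via $\overline{b}=2\,\mathrm{Re}(b)-b$, whereas the paper reduces to purely imaginary $b$.
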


\begin{proof}
First we will prove it for $a=1$.
We may decompose $b$ into a real and a purely imaginary part and check it by parts.
Since it is trivial for real numbers, we may assume $b$ is purely imaginary.
Consider $[u,v,m]=[m,u,v]$ for arbitrary $u$ and $v$.
The left hand side is $(uv)m-u(vm)=(uv-vu)m$, while the right hand side is $(mu)v-m(uv)=m(vu-uv)$.
By choosing $u$ and $v$ in a way that $b=vu-uv$, we get $\overline{b}m=-bm=mb$.

Now consider the general case of $[b,a,m]=[a,m,b]$.
Once again we will only check for purely imaginary $b$.
The right hand side is $[a,m,b]=(am)b-a(mb)$ and $a(mb)=a(\overline{b}m)=(\overline{b}a)m=-(ba)m$, giving $(am)b+(ba)m$ for the right hand side.
The left hand side is $(ba)m-b(am)=(ba-ab)m$.
Reordering the terms, we get
$(am)b=(ba-ab)m-(ba)m=-(ab)m=(a\overline{b})m$.
\end{proof}

\begin{theorem}
Assume $M$ is of type $(\mu,\nu)$ and is also a bimodule.
Then $\nu=0$.
\end{theorem}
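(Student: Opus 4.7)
The plan is to assume for contradiction that $\nu > 0$, fix a nonzero $m \in \conjassoc(M)$, and derive a contradictory octonionic identity by applying the cyclic bimodule axiom $[p, q, x] = [q, x, p]$ to an element of the form $x = am$ for a suitable $a \in \mathbb{O}$. The key preliminary observations are that $r(am) = (ar)m$ for all $r, a \in \mathbb{O}$ (a direct consequence of the defining identity of $\conjassoc(M)$: setting $p = a$, $q = r$ in $(pq)m = q(pm)$), together with Lemma \ref{thm:conjclosed}, which gives $(bm)c = (b\overline{c})m$ and in particular $mc = \overline{c}m$. With these, every expression appearing in the two associators $[p, q, am]$ and $[q, am, p]$ can be rewritten as a single octonion acting on $m$.

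Concretely, for arbitrary $a \in \mathbb{O}$ and pure imaginary $p, q \in \mathbb{O}$, I would compute
\[
[p, q, am] = \bigl(a(pq) - (aq)p\bigr)m = \bigl(a(pq - qp) + [a, p, q]\bigr)m,
\]
where the last step uses the octonionic associator identity $(aq)p = a(qp) + [a, q, p]$ and the total antisymmetry of the $\mathbb{O}$-associator. For the other side, two applications of Lemma \ref{thm:conjclosed} together with one more use of $r(am) = (ar)m$ give
\[
[q, am, p] = \bigl((aq)\overline{p} - (a\overline{p})q\bigr)m;
\]
expanding each product in $\mathbb{O}$ and substituting $\overline{p} = -p$ reduces this to $\bigl(a(pq - qp) + 2[a, p, q]\bigr)m$. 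The bimodule axiom then forces $[a, p, q] \cdot m = 0$ for every $a \in \mathbb{O}$ and every pair of pure imaginary $p, q \in \mathbb{O}$.

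Since $m \ne 0$ lies in $\conjassoc(M)$, the cyclic submodule $\mathbb{O}m$ is isomorphic to $\overline{\mathbb{O}}$ by Theorem \ref{thm:decomp} and so has real dimension $8$; the surjection $r \mapsto rm$ from $\mathbb{O}$ to $\mathbb{O}m$ is therefore an $\mathbb{R}$-linear isomorphism and in particular injective, so $rm = 0$ forces $r = 0$. Consequently $[a, p, q] = 0$ in $\mathbb{O}$ for every such triple, contradicting the non-associativity of $\mathbb{O}$ (any three basis imaginary units not lying in a common quaternionic subalgebra have nonzero associator). The main obstacle is the careful bookkeeping in the computation of $[q, am, p]$, where Lemma \ref{thm:conjclosed} must be applied twice and signs tracked through the substitution $\overline{p} = -p$ together with the alternating $\mathbb{O}$-associator; the computation of $[p, q, am]$ goes through using only the left-module identities, and the asymmetry between the two sides that eventually yields the nontrivial coefficient $2[a, p, q] - [a, p, q]$ is what drives the contradiction.
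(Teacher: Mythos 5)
Your proof is correct and takes essentially the same approach as the paper: both apply the bimodule identity $[p,q,x]=[q,x,p]$ to an element $x$ of the form (octonion)$\cdot m$ with $m\in\conjassoc(M)$, reduce every term to a single octonion acting on $m$ via the conjugate-associative property and Lemma \ref{thm:conjclosed}, and conclude from the nonvanishing of an octonionic associator together with injectivity of left multiplication on $\overline{\mathbb{O}}$ that $m=0$. The paper specializes at once to a fixed non-associating triple $u,v,w$ (acting on $wm$), whereas you keep $a$ and the pure imaginary $p,q$ general and first extract the clean identity $[a,p,q]\,m=0$ before specializing; the underlying computation and contradiction are the same.
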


\begin{proof}
Let us choose $3$ pure imaginary octonionic units that do not lie in an associative subfield: $u$, $v$, $w$ (for example, using the notations in section 2, $e_1$, $e_2$ and $e_3$) and an element $m\in\conjassoc(M)$.
We will show that $[u,v,wm]=[v,wm,u]$ leads to $m=0$.

The left hand side is $(uv)(wm)-u(v(wm))=(w(uv)-(wv)u)m$.
Since $u$, $v$, $w$ do not lie in an associative subfield, $(wv)u=-w(vu)=w(uv)$, thus $[u,v,wm]=0$.

Now consider the right hand side $(v(wm))u-v((wm)u)=((wv)m)u-v((wm)u)$.
By \ref{thm:conjclosed}, $(am)u=(a\overline{u})m=-(au)m$ for a pure imaginary $u$, therefore $((wv)m)u=-((wv)u)m$ and $((wm)u)=-(wu)m$.
The right hand side thus becomes $-((wv)u)m+v((wu)m)=((wu)v-(wv)u)m$.
Once again, $u$, $v$, $w$ do not lie in an associative subfield, therefore $(wv)u=-w(vu)=w(uv)$ and $(wu)v=-w(uv)$, thus we get $-2(w(uv))m$, which must be zero.
Since the octonions are a division ring, $w(uv)\ne 0$, therefore $m=0$.
\end{proof}

\begin{corollary}
For a given left $\mathbb{O}$-module $M$ of type $(\mu,\nu)$ for arbitrary cardinalities $\mu$ and $\nu$, $M$ admits a bimodule structure if and only if $\nu=0$, or equivalently, $\conjassoc(M)=0$.
For such modules, the bimodule structure is unique and isomorphic to the direct sum of bimodules $\bigoplus_{i\in S}\mathbb{O}$ for $S$ a set of cardinality $\mu$.
\end{corollary}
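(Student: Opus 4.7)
The plan is to dispatch the corollary in three short steps, each a direct consequence of what has already been established. Necessity of $\nu = 0$ is exactly the preceding theorem, so nothing new needs to be added there.

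For existence when $\nu = 0$, I would invoke Theorem~\ref{thm:decomp} to pick a real basis $\{e_i\}_{i \in S}$ of $\assoc(M)$, giving $|S| = \mu$ and $M \cong \bigoplus_{i \in S}\mathbb{O}$ as a left $\mathbb{O}$-module. The octonions form a bimodule over themselves under their own multiplication, because the bimodule identity $[p,q,m] = [q,m,p] = [m,p,q]$ specializes on $\mathbb{O}$ to the alternative property recalled in the introduction; and a direct sum of bimodules is manifestly a bimodule, so transporting this structure along the isomorphism yields the desired bimodule structure on $M$.

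For uniqueness, I would assume $M$ carries an arbitrary bimodule structure and show that the right action is completely forced. With the basis $\{e_i\}$ as above, every $m \in M$ has a unique decomposition $m = \sum_i p_i e_i$ with $p_i \in \mathbb{O}$ by Theorem~\ref{thm:decomp}. Lemma~\ref{thm:equassoc} gives $e_i \in \assoc_R(M)$ together with $(p_i e_i)q = p_i(e_i q)$; the subsequent lemma gives $e_i q = q e_i$; and $e_i \in \assoc_L(M)$ gives $p_i(q e_i) = (p_i q) e_i$. Chaining these,
$$
m q \;=\; \sum_i (p_i e_i)\, q \;=\; \sum_i p_i (e_i q) \;=\; \sum_i p_i (q e_i) \;=\; \sum_i (p_i q)\, e_i,
$$
which is exactly the componentwise right multiplication on $\bigoplus_{i \in S}\mathbb{O}$. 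Hence the right action is already determined by the left action and the bimodule axioms, and must coincide with the one constructed in the previous step.

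I do not anticipate any real obstacle: the analytic weight is carried by the lemmas culminating in $\conjassoc(M) = 0$ and $pm = mp$ for $m$ associative, and what remains is the routine bookkeeping of matching each equality in the displayed chain with the appropriate invocation. The only sanity check worth flagging is that the componentwise action on $\bigoplus_{i \in S}\mathbb{O}$ really does satisfy the bimodule identity, but this reduces to the alternative identities on $\mathbb{O}$ applied in each summand.
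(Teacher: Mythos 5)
Your argument is correct and complete. The paper states this corollary without proof, and your proposal supplies exactly the argument that the preceding lemmas were built to support: necessity from the theorem immediately above, existence because $\mathbb{O}$ is a bimodule over itself via alternativity and the componentwise action on $\bigoplus_{i\in S}\mathbb{O}$ inherits the bimodule identity, and uniqueness by the chain $mq=\sum_i(p_ie_i)q=\sum_i p_i(e_iq)=\sum_i p_i(qe_i)=\sum_i(p_iq)e_i$, each step justified by Lemma~\ref{thm:equassoc}, the lemma $pm=mp$ for $m\in\assoc(M)$, and the defining property of $\assoc_L(M)$ respectively.
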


\section*{Acknowledgements}
The author is grateful to Qinghai Huo for pointing out a mistake in private correspondence in a previous version of the article.

\end{document}